\title[Stable maps and hyperbolic links]
{Stable maps and hyperbolic links}
\author{Ryoga Furutani}
\address{
Department of Mathematics \newline
\indent Hiroshima University, 1-3-1 Kagamiyama, Higashi-Hiroshima, 739-8526, Japan}
\email{ryoga.furutani0409@gmail.com}
\author{Yuya Koda}
\address{
Department of Mathematics \newline
\indent Hiroshima University, 1-3-1 Kagamiyama, Higashi-Hiroshima, 739-8526, Japan}
\email{ykoda@hiroshima-u.ac.jp}
\theoremstyle{plain}
\newtheorem*{theorem*}{Theorem}
\newtheorem*{lemma*} {Lemma}
\newtheorem*{corollary*} {Corollary}
\newtheorem*{proposition*}{Proposition}
\newtheorem*{conjecture*}{Conjecture}
\newtheorem{theorem}{Theorem}[section]
\newtheorem{corollary}[theorem]{Corollary}
\newtheorem{proposition}[theorem]{Proposition}
\newtheorem{claim}{Claim}
\theoremstyle{remark}
\newtheorem*{definition}{Definition}
\newtheorem*{claim*}{Claim}
\newtheorem{example}{Example}
\theoremstyle{definition}
\newtheoremstyle{citing}
  {}
  {}
  {\itshape}
  {}
  {\bfseries}
  {.}
  {.5em}
  {\thmnote{#3}}
\theoremstyle{citing}
\newcommand{\Integer}{\mathbb{Z}}
\newcommand{\Real}{\mathbb{R}}
\newcommand{\Complex}{\mathbb{C}}
\newcommand{\Nbd}{\operatorname{Nbd}}
\newcommand{\Int}{\operatorname{Int}}
\newcommand{\gl}{\operatorname{gl}}
\begin{document}

\maketitle

\begin{abstract}
A stable map of a closed orientable $3$-manifold into the real plane is called 
a stable map of a link in the manifold if the link is contained in the set of definite fold points.   
We give a complete characterization of the hyperbolic links in the $3$-sphere that admit stable maps 
into the real plane with exactly one (connected component of a) fiber having two singular points. 
\end{abstract}

\vspace{1em}

\begin{small}
\hspace{2em}  \textbf{2020 Mathematics Subject Classification}: 
57R45; 57K10, 57K32, 57R05


\hspace{2em} 
\textbf{Keywords}:
stable map, branched shadow, hyperbolic link.
\end{small}

\section*{Introduction}

Let $f: M \to \Real^2$ be a stable map of a closed orientable $3$-manifold $M$ into the real plane $\Real^2$. 
It is well-known that the singular points of $f$ are classified into three types: 
definite fold, indefinite fold and cusp. 
In \cite{Lev65} Levine showed that the cusp points can always be eliminated by 
a homotopical deformation. 
This implies that every $M$ admits a stable map into $\Real^2$ without
cusp points. 
In the following, we only consider stable maps without cusp points. 
The properties of the other types of singular points 
reflect the global topology of the source manifold $M$ as follows. 
In \cite{BR74} Burlet-de Rham showed that if $M$ admits a stable map into $\Real^2$ 
with only definite fold points, then $M$ is a connected sum $\#_g (S^2 \times S^1)$ 
for some non-negative integer $g$, where the empty connected sum $\#_0 (S^2 \times S^1)$ 
implies $S^3$. 
In general, each connected component of a fiber of a stable map $f: M \to \Real^2$ 
contains at most two singular points, and a 
connected component of a fiber of $f$ containing exactly two singular points, both of which 
are indefinite fold, is one of types $\mathrm{II}^2$ and $\mathrm{II}^3$ singular fibers 
shown in Figure \ref{figure:vertexfiber}. 
For any stable map of $M$ into $\Real^2$, there exists at most finitely many singular fibers of 
these types. 
\begin{figure}[htbp]
\centering\includegraphics[width=5cm]{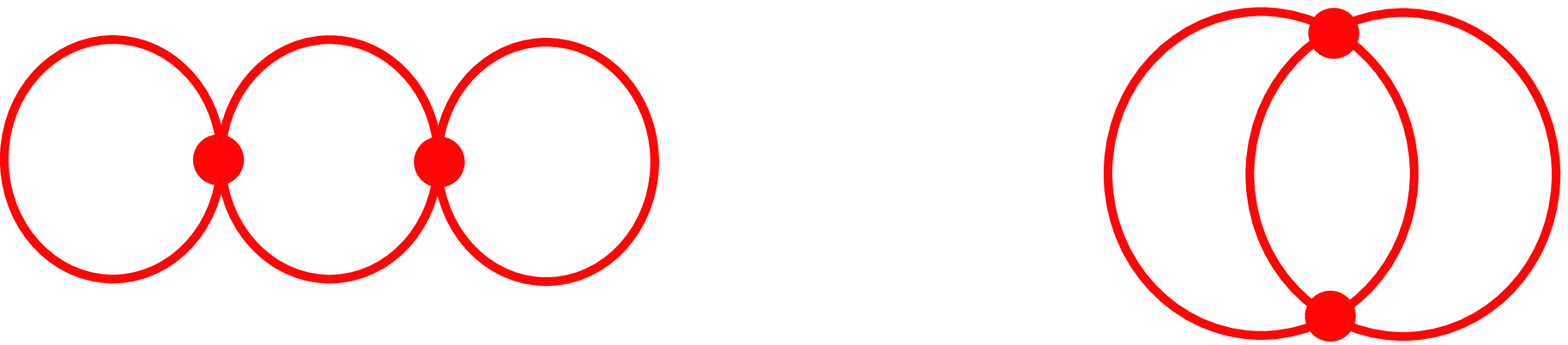}
\begin{picture}(400,0)(0,0)
\put(140,0){Type $\mathrm{II}^2$}
\put(230,0){Type $\mathrm{II}^3$}
\end{picture}
\caption{Types $\mathrm{II}^2$ and $\mathrm{II}^3$ singular fibers.}
\label{figure:vertexfiber}
\end{figure}
In \cite{Sae96} Saeki generalized the result of \cite{BR74} showing that 
if $M$ admits a stable map into $\Real^2$ with no 
$\mathrm{II}^2$ or $\mathrm{II}^3$ singular fibers, 
then $M$ is a graph manifold, and vice versa. 
Recall that a $3$-manifold is a graph manifold if and only if its Gromov norm is zero.  
Later, Costantino-Thurston \cite{CT08} and Gromov \cite{Gro09} gave, independently, 
a linear lower bound 
of the number of types $\mathrm{II}^2$ and $\mathrm{II}^3$ singular fibers 
of stable maps of $M$ into $\Real^2$ 
in terms of Gromov norm.

In the present paper, we are interested in stable maps of links. 
Again, let $f: M \to \Real^2$ be a stable map. 
We note that the set $S(f)$ of singular points of $f$ forms a link in $M$, each of whose components consists only of 
exactly one type of singular points. 
When a link $L$ in $M$ is contained in the set of definite fold points of $f$, we say that  
$f$ is a stable map of $(M, L)$.   
In \cite{Sae96} Saeki showed that if $(M, L)$ admits a stable map 
with no singular fibers of type $\mathrm{II}^2$ or $\mathrm{II}^3$, 
then $L$ is a graph link, that is, the exterior of $L$ is a graph manifold, and vice versa.
This implies that any stable map of a hyperbolic link $L$ in $M$ 
has at least one singular fiber of type $\mathrm{II}^2$ or $\mathrm{II}^3$. 
Recently, Ishikawa-Koda \cite{IK17} gave a complete characterization of 
the hyperbolic links in $S^3$ admitting stable maps into $\Real^2$ 
with a single singular fiber type $\mathrm{II}^{2}$ and 
no one of type $\mathrm{II}^{3}$, see Theorem~\ref{thm:classification by Ishikawa-Koda}. 
For other work on the study of links through stable maps, see e.g. \cite{Sae94,KS12}. 

The following is the main theorem of this paper. 
\begin{theorem} 
\label{thm:main theorem}
Let $L$ be a hyperbolic link in $S^3$. 
Then there exists a stable map $f:(S^3,L) \to \Real^2$ with 
$\mathrm{II}^{2}(f) = \emptyset$ and  $|\mathrm{II}^{3}(f)| = 1$ if and only if 
the exterior of $L$ is diffeomorphic to
a $3$-manifold obtained by Dehn filling the exterior of one of the 
four links $L'_{1}, L'_2, L'_3$ and $L'_4$ in $S^3$ along some of 
$($possibly none of$)$ boundary tori, 
where $L'_{1}, L'_2, L'_3, L'_4$ are 
depicted in Figure $\ref{figure:Furutanilink}$. 
\end{theorem}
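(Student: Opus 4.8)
The plan is to translate the problem entirely into the language of branched shadows and then carry out a finite classification. Recall from the work of Costantino--Thurston, refined for links by Ishikawa--Koda, that a cusp-free stable map $f:(S^3,L)\to\Real^2$ gives rise, via its Stein factorization, to a branched simple polyhedron $P$ equipped with gleams -- a \emph{branched shadow} -- from which the pair $(S^3,L)$ is reconstructed. Under this dictionary the definite fold locus, and hence $L$, is recorded by $\partial P$; the indefinite fold locus maps to the triple lines $S(P)$; and each singular fiber of type $\mathrm{II}^2$ or $\mathrm{II}^3$ maps to a true vertex of $P$, the two fiber types corresponding precisely to the two branching types of a vertex. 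Thus the hypothesis $\mathrm{II}^2(f)=\emptyset$ and $|\mathrm{II}^3(f)|=1$ is equivalent to saying that $(S^3,L)$ admits a branched shadow $P$ with exactly one vertex, and that this vertex is of $\mathrm{II}^3$ type. This is the same mechanism underlying Theorem~\ref{thm:classification by Ishikawa-Koda}, the only change being the branching type forced at the single vertex.

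For the ``only if'' direction I would first classify the underlying branched polyhedra with a single $\mathrm{II}^3$ vertex. Since $P$ has one vertex, its singular set $S(P)$ is a four-valent graph with a single vertex, so it consists of just two loops, and the $\mathrm{II}^3$ branching pins down the local model around that vertex. The regions of $P$ are then glued to this local picture along $S(P)$ and $\partial P$, and a count of regions, edges, and boundary circles via the Euler characteristic, together with the requirement that the branching and the gleam constraints encoding $M=S^3$ extend globally, leaves only finitely many combinatorial types of $P$. I would then discard those whose reconstructed ambient manifold is not $S^3$ and those for which $L$ is split, unknotted, or otherwise non-hyperbolic.

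Once the admissible polyhedra are listed, I would reconstruct the link exterior from each one. The gleams are the only remaining freedom, and changing the gleam on a region meeting $\partial P$ has the effect of a Dehn filling along the corresponding boundary torus of the ``maximal'' exterior obtained by leaving those tori unfilled. Each surviving combinatorial type therefore produces one universal link, and I expect exactly four to survive, namely $L'_1,L'_2,L'_3,L'_4$ of Figure~\ref{figure:Furutanilink}; every admissible $L$ then has $E(L)$ diffeomorphic to a Dehn filling of some $E(L'_i)$. For the ``if'' direction I would run the construction backwards: starting from the explicit shadow of $L'_i$, the prescribed fillings only alter gleams and create no new vertices, so converting the filled branched shadow back into a stable map yields an $f$ with $\mathrm{II}^2(f)=\emptyset$ and $|\mathrm{II}^3(f)|=1$.

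The main obstacle will be the enumeration-and-identification step: controlling all branched single-vertex polyhedra, performing the gleam bookkeeping that recovers $S^3$, and then recognizing the resulting link exteriors as the four explicit links of Figure~\ref{figure:Furutanilink} while proving that these are hyperbolic and mutually non-diffeomorphic. Verifying hyperbolicity -- by exhibiting or computing the complete hyperbolic structures -- and confirming that the Dehn filling family is exactly the stated one, neither omitting fillings that still yield hyperbolic links nor including ones that collapse the ambient manifold, is where the real work lies.
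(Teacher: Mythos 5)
Your translation into shadows breaks down at the very first step, and the error is fatal to both directions. You assert that ``each singular fiber of type $\mathrm{II}^2$ or $\mathrm{II}^3$ maps to a true vertex of $P$, the two fiber types corresponding precisely to the two branching types of a vertex,'' and hence that the hypothesis is equivalent to $(S^3,L)$ admitting a branched shadow with exactly one vertex. This is not the correspondence. A type $\mathrm{II}^3$ fiber does not correspond to a vertex of a branched shadow at all: the local model of the Stein factorization at a $\mathrm{II}^3$ point (Figure~\ref{figure:steindecomodel}~(iv)) is not even a simple polyhedron, and by Theorem~\ref{thm:from a Stein factorization to a branched shadow} one must replace its neighborhood by the local part $X$ of Figure~\ref{figure:X} --- a bigon of gleam $+1$ with \emph{two} true vertices. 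Conversely, Theorem~\ref{thm:bsc is equal to smc} converts every vertex of a branched shadow into a $\mathrm{II}^2$ fiber, never a $\mathrm{II}^3$ fiber. So the correct combinatorial condition is a two-vertex shadow containing $X$, and the classification must run over the three $4$-valent graphs with two vertices (one of which is excluded by a branching obstruction, another candidate by simple connectivity of shadows of $S^3$), not over one-vertex graphs with two loops as you propose. Your enumeration would search the wrong class of polyhedra and could never produce the bigon structure that encodes the $\mathrm{II}^3$ fiber; it also misses that the search initially yields \emph{five} links, one of which is discarded only because its exterior happens to coincide with $E(L_3')$.

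The same misconception sinks your ``if'' direction. Starting from the shadow $P_i$ (which contains $X$, hence two vertices), the construction of Theorem~\ref{thm:bsc is equal to smc} yields a stable map with $|\mathrm{II}^2(f)|=2$ and $\mathrm{II}^3(f)=\emptyset$, not the map you claim. An additional surgery step is indispensable: one must cut out the genus-three handlebody $q_f^{-1}(X)$ and reglue, via an explicit handle slide matching the fibered structures on the boundary, the local model of a genuine $\mathrm{II}^3$ fiber --- this is Corollary~\ref{cor:replacement stable maps}, which trades the two $\mathrm{II}^2$ fibers for a single $\mathrm{II}^3$ fiber. Without this replacement mechanism (and the boundary-fiber matching that justifies it), no argument of the shape you outline can produce a map with $\mathrm{II}^2(f)=\emptyset$ and $|\mathrm{II}^3(f)|=1$.
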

\begin{figure}[htbp]
\centering\includegraphics[width=14cm]{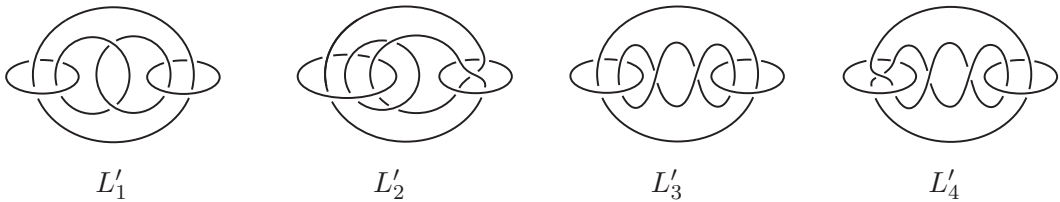}
\begin{picture}(400,0)(0,0)
\put(35,0){$L'_{1}$}
\put(140,0){$L'_{2}$}
\put(245,0){$L'_{3}$}
\put(350,0){$L'_{4}$}
\end{picture}
\caption{The links $L'_{1}, L'_2, L'_3$ and $L'_4$.}
\label{figure:Furutanilink}
\end{figure}

We note that the link $L'_1$ above is the minimally twisted 5-chain link. 
All of the above links $L'_{1}, L'_2, L'_3, L'_4$ are invariant by the rotation of $\pi$ around a horizontal axis. 
Theorem~\ref{thm:main theorem}, together with the above result of Ishikawa-Koda \cite{IK17}, completes the 
characterization of the hyperbolic links in $S^3$ that admit stable maps 
into $\Real^2$ with exactly one (connected component of a) fiber having two singular points. 
The proof is based on the connection between the Stein factorizations of stable maps and 
branched shadows developed in Costantino-Thurston \cite{CT08} and Ishikwa-Koda \cite{IK17}. 
Our proof is constructive, thus, as we will see in 
Corollary~\ref{cor:configuration of the unique singular fiber of type II3} 
we can actually describe the configuration of the fibers of the maps. 
We remark that the links $L'_1, L'_2, L'_3, L'_4$ in Theorem~\ref{thm:main theorem}
are all hyperbolic of volume $10 v_{\mathrm{tet}}$, where 
$v_{\mathrm{tet}} = 1.0149 \cdots$ is the volume of the ideal regular tetrahedron. 
In fact, as was mentioned in Costantino-Thurston \cite{CT08} 
we can find a decomposition of the complement of 
each of $L'_1, L'_2, L'_3, L'_4$ into $10$ ideal regular tetrahedra. 
In the Appendix, we describe this decomposition explicitly. 

\vspace{1em}


\section{Preliminaries}
\label{sec:Preliminaries}

\vspace{1em}

Throughout the paper, we will work in the smooth category unless otherwise mentioned. 
Let $Y$ be a subspace of a polyhedral space $X$. 
The symbolds $\Nbd(Y; X)$ and $\Int (Y)$ will denote 
a regular neighborhood of $Y$ in $X$ and the interior of $Y$ in $X$, respectively. 
The number of elements of a set $S$ is denoted by $|S|$.  

\subsection{Stable maps}
\label{subsec:Stable maps}

Let $M$ be a closed orientable $3$-manifold. 
A smooth map $f$ of $M$ into $\Real^2$ is said to be {\it stable} 
if there exists an open neighborhood of $f$ in $C^\infty(M, \Real^2)$
such that for any map $g$ in that neighborhood there exist 
diffeomorphisms $\Phi : M \to M$ and $\varphi : \Real^2 \to \Real^2$ satisfying
$g=\varphi\circ f\circ \Phi^{-1}$, 
where $C^\infty(M, \Real^2)$ is the set of smooth maps of $M$ into $\Real^2$
with the Whitney $C^\infty$ topology. 
We denote by $S(f)$ the set of singular points of $f$, that is, 
$S(f) = \{ p \in M \mid \mathrm{rank}~ df_p < 2\} $. 
The stable maps form an open dense set in the space $C^\infty (M, \Real^2)$, see Mather \cite{Mat71}. 

\begin{proposition}[see e.g. Levine \cite{Lev85}]
Let $M$ be a closed-orientable $3$-manifold. 
Then a smooth map $f : M \to \Real^2$ is stable if and only if 
$f$ satisfies the following conditions $(1)$--$(6)$: 
\begin{description}
\item[Local conditions]
There exist local coordinates centered at $p$ and $f(p)$ such that 
$f$ is locally described in one of the following way: 
\begin{enumerate}
\item
$(u,x,y) \mapsto (u,x)$; 
\item
$(u,x,y) \mapsto (u,x^2 + y^2)$; 
\item
$(u,x,y) \mapsto (u,x^2 - y^2)$; 
\item
$(u,x,y) \mapsto (u,y^2 + ux -x^3)$. 
\end{enumerate}
$($In the cases of $(1)$, $(2)$, $(3)$, and $(4)$, $p$ is called a {\it regular point}, a {\it definite fold point}, 
an {\it indefinite fold point}, and a {\it cusp point}, respectively.$)$	
\item[Global conditions] 
Additionally, $f$ satisfies
\begin{enumerate}
\setcounter{enumi}{4}
\item
$f^{-1} (f(p)) \cap S(f) = \{ p \}$ for a cusp point $p$;  and
\item
the restriction of $f$ to $S(f) - \{\mbox{cusp points}\}$ is an immersion with only normal crossings.
\end{enumerate}
\end{description}
\end{proposition}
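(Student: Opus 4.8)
The plan is to derive the statement from Mather's characterization of stable mappings: for a proper smooth map between manifolds, stability is equivalent to infinitesimal stability. Since $M$ is closed the map $f$ is automatically proper, so it suffices to show that $f$ is infinitesimally stable if and only if it satisfies conditions $(1)$--$(6)$. I would prove the two implications together by splitting the analysis into a local part (the monogerms, giving $(1)$--$(4)$) and a global part (the multigerms, giving $(5)$--$(6)$).

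For the local part I would analyse the germ of $f$ at a point $p\in M$ by means of the Thom--Boardman stratification. A codimension count rules out $\operatorname{corank} df_p\ge 2$, so either $df_p$ has rank $2$, in which case $f$ is a submersion at $p$ and we obtain the normal form $(1)$, or $\operatorname{corank} df_p=1$. In the latter case the only Morin singularities that can occur are the fold $\Sigma^{1}$ and the cusp $\Sigma^{1,1}$: the stratum $\Sigma^{1,1}$ is already $0$-dimensional in a $3$-manifold, so $\Sigma^{1,1,1}$ and all higher Morin strata are generically empty. Morin's normal-form theorem then supplies the explicit right--left models, and the two possible signatures of the intrinsic second derivative of $f$ restricted to $\ker df_p$ separate the fold into the definite model $(2)$ and the indefinite model $(3)$, the cusp being the model $(4)$. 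Checking that each of $(1)$--$(4)$ is itself infinitesimally stable is a direct computation in the local algebra of the germ.

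For the global part I would pass to multigerms over a point $q\in\Real^2$. Infinitesimal stability of such a multigerm is a general-position (transversality) condition on the several sheets of $f$ and of $S(f)$ lying over $q$. Unwinding this for the models found above yields precisely condition $(5)$, that a cusp point is the unique singular point in its fiber, and condition $(6)$, that away from cusps the restriction $f|_{S(f)}$ is an immersion whose multiple points are transverse normal crossings. Conversely, $(5)$ and $(6)$ guarantee that every multigerm of $f$ belongs to the stable list, so that $f$ is infinitesimally stable.

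The main obstacle is the local classification underlying $(1)$--$(4)$: proving that the corank is at most $1$ and that only the Morin germs $\Sigma^{1}$ and $\Sigma^{1,1}$ arise, and then passing to their explicit right--left normal forms, is the substantive singularity-theoretic input (Morin's theorem). Once this is available, the splitting of the fold into its definite and indefinite types according to the signature of the second derivative, and the translation of multijet transversality into the combinatorial conditions $(5)$ and $(6)$, are comparatively routine.
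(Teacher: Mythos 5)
Your proposal is correct and follows essentially the same route as the proof the paper relies on: the paper states this proposition without proof, citing Levine \cite{Lev85}, and the argument there (see also Golubitsky--Guillemin \cite{GG73} and Mather \cite{Mat71}) is precisely your reduction via Mather's equivalence of stability and infinitesimal stability for proper maps, the Thom--Boardman/Morin analysis yielding the local models $(1)$--$(4)$ (corank $\geq 2$ and $\Sigma^{1,1,1}$ excluded by codimension, the fold split by the signature of the intrinsic second derivative on $\ker df_p$), and multigerm transversality over points of $\Real^2$ yielding the global conditions $(5)$ and $(6)$. The only caveats are the standard singularity-theoretic inputs, which you correctly identify as the substantive content rather than gaps.
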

In \cite{Lev65} Levine showed that 
the cusp points of each stable map can be eliminated by 
a homotopical deformation, which implies that  
every 3-manifold admits a stable map into $\Real^2$ without
cusp points. 
We note that the set $S(f)$ forms a link in $M$. 
For general definition and properties of stable maps, see e.g. Levine \cite{Lev85}, 
Golubitsky-Guillemin \cite{GG73}, and Saeki \cite{Sae04}. 
In the following, we only consider stable maps without cusp points. 

Let $M$ be a closed orientable 3-manifold, and  $f: M \to \Real^2$ a stable map. 
We say that two points $p_1$ and $p_2$ are {\it equivalent} if they are 
contained in the same component of the fibers of $f$. 
We denote by $Q_f$ the quotient space of $M$ with respect to the equivalence relation 
and by $q_f $ the quotient map. 
We define the map $\bar{f} : Q_f \to \Real^2$ so that $f = \bar{f} \circ q_f$. 
The quotient space $Q_f$ is called the {\it Stein factorization} of $f$. 
By Kushner-Levine-Porto \cite{KLP84} and Levine \cite{Lev85} the local models of the Stein factorization 
$Q_f$ can be described as in Figure \ref{figure:stablemap_fiber}. 
\begin{figure}[htbp]
\centering\includegraphics[width=13.5cm]{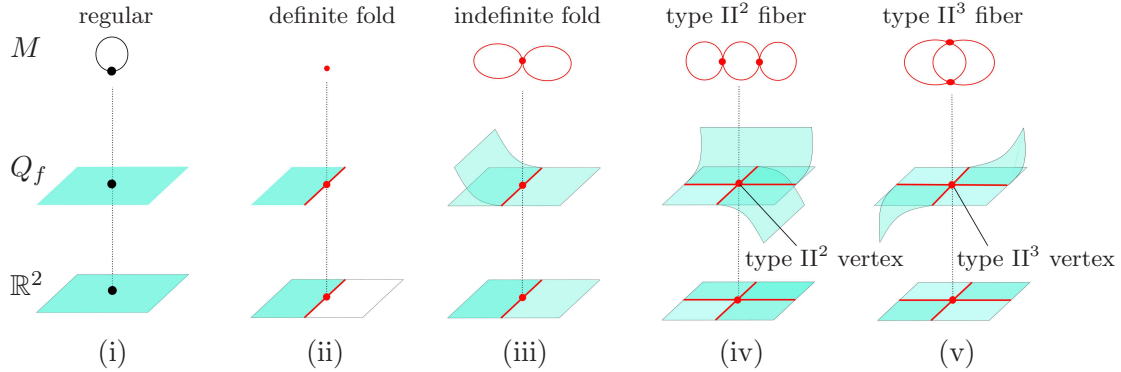}
\begin{picture}(400,0)(0,0)
\put(26,128){\footnotesize regular}
\put(98,128){\footnotesize definite fold}
\put(168,128){\footnotesize indefinite fold}
\put(248,128){\footnotesize type $\mathrm{II}^{2}$ fiber}
\put(330,128){\footnotesize type $\mathrm{II}^{3}$ fiber}
\put(278,36){\footnotesize type $\mathrm{II}^{2}$ vertex}
\put(358,36){\footnotesize type $\mathrm{II}^{3}$ vertex}
\put(0,115){$M$}
\put(0,70){$Q_{f}$}
\put(0,25){$\Real^{2}$}
\put(33,0){(i)}
\put(112,0){(ii)}
\put(186,0){(iii)}
\put(268,0){(iv)}
\put(351,0){(v)}

\end{picture}
\caption{The Stein factorization of a stable map.}
\label{figure:stablemap_fiber}
\end{figure}

As shown in the figure, there are two types of (connected components of) singular fibers 
containing exactly two indefinite points. 
The one shown in Figure (iv) ((v), respectively) is called a 
 {\it singular fiber of type $\mathrm{II}^2$} ($\mathrm{II}^3$, respectively). 
 We sometimes call a singular fiber of type $\mathrm{II}^2$ or $\mathrm{II}^3$ a 
 {\it codimension-$2$ singular fiber}. 
 We denote by $\mathrm{II}^2(f)$ and $\mathrm{II}^3(f)$ 
 the sets of singular fibers of types $\mathrm{II}^2$ and $\mathrm{II}^3$ of $f$, respectively. 
We note that both $\mathrm{II}^2(f)$ and $\mathrm{II}^3(f)$ are finite sets. 
Further, we call a vertex of the Stein factorization $Q_f$ corresponding to 
a singular fiber of  type $\mathrm{II}^2$ ($\mathrm{II}^3$, respectively) 
a vertex of {\it type $\mathrm{II}^2$} ($\mathrm{II}^3$, respectively).  

\begin{definition}
\label{def:stable maps for links}
Let $L$ be a link in a closed orientable $3$-manifold $M$. 
Then a stable map $f : M \to \Real^2$ is called a {\it stable map of $(M, L)$} if 
$L$ is contained in the set of definite fold points of $f$.   
\end{definition}
We note that for any link $L$ in a closed orientable $3$-manifold, there exists a stable map 
$f : (M, L) \to \Real^2$, see e.g. Ishikawa-Koda \cite{IK17}.

In \cite{Sae96}, Saeki gave a complete characterization of a link $L$ 
in a closed orientable $3$-manifold $M$ that admits a stable map 
$(M, L) \to \Real^2$ without codimension-$2$ singular fibers as follows. 
  
\begin{theorem}[Saeki \cite{Sae96}]
\label{thm:Saeki's theorem on graph manifolds}
Let $L$ be a link in a closed orientable $3$-manifold $M$. 
Then there exists a stable map $f : (M, L) \to \Real^2$ with 
$\mathrm{II}^2(f)  = \mathrm{II}^3(f) = \emptyset$ if and only if 
$L$ is a graph link.  
\end{theorem}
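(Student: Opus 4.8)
The plan is to analyze both implications through the structure of the Stein factorization $Q_f$, using the local models recorded in Figure \ref{figure:stablemap_fiber}. For the forward implication, suppose $f \colon (M,L) \to \Real^2$ is a stable map with $\mathrm{II}^2(f) = \mathrm{II}^3(f) = \emptyset$. Since we consider maps without cusps and there are no codimension-$2$ vertices, the only local models occurring in $Q_f$ are (i) the regular model, (ii) the definite-fold model, and (iii) the indefinite-fold model. Hence $Q_f$ is a compact $2$-dimensional polyhedron whose singular set is a disjoint union of circles and arcs of triple points (images of indefinite folds), together with a boundary-type stratum that is the image of the definite folds; in particular $Q_f$ has no vertices, since these would be the images of codimension-$2$ fibers. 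First I would record the fiber structure of the quotient map $q_f$: over the interior $2$-manifold strata of $Q_f$ the map $q_f$ is an $S^1$-bundle (the regular fibers are circles), along the definite-fold stratum these circles collapse to points, and along a triple arc three such $S^1$-bundle sheets are amalgamated through the figure-eight fibers.

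Next I would cut $Q_f$ along the image of the indefinite-fold locus. This decomposes $Q_f$ into compact surfaces-with-boundary $F_1,\dots,F_k$, and I claim that the preimage $q_f^{-1}(\Nbd(F_i))$ is Seifert fibered over $F_i$, the regular fibers of $f$ being the Seifert fibers: over the interior it is a genuine circle bundle, the definite-fold arcs of the boundary of $F_i$ contribute disk-bundle (solid-torus) cappings, and the triple arcs contribute the torus gluing regions. Consequently $M$ is obtained by gluing the Seifert pieces $q_f^{-1}(\Nbd(F_i))$ along tori that are the preimages of the triple arcs, so $M$ is a graph manifold; since $L$ lies in the definite-fold stratum, deleting $\Nbd(L)$ only introduces new torus boundary components compatible with the fibrations, whence the exterior $E(L) = M \setminus \Int\Nbd(L)$ is again a graph manifold and $L$ is a graph link. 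Alternatively, one may invoke the criterion recalled in the Introduction that a $3$-manifold is a graph manifold if and only if its Gromov norm vanishes, and bound the Gromov norm of $E(L)$ by a constant multiple of the number of codimension-$2$ fibers, which is zero.

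For the converse, assume $L$ is a graph link, so $E(L)$ decomposes as a union of Seifert fibered pieces $V_1,\dots,V_n$ glued along a family of tori. On each $V_i$ I would build, in Burlet--de Rham style, a stable map to $\Real^2$ whose regular fibers are exactly the Seifert fibers and whose only singularities are definite and indefinite folds, with $\mathrm{II}^2 = \mathrm{II}^3 = \emptyset$: collapse the Seifert fibers to obtain the base orbifold and compose with a generic fold map of the base into the plane, treating the exceptional fibers by the standard local models. I would then glue these maps across the splitting tori and, finally, reinsert the solid tori $\Nbd(L)$ using the definite-fold solid-torus model, arranging the fibration near the boundary of $\Nbd(L)$ so that the core $L$ lies in the definite-fold set; this yields the desired stable map $f \colon (M,L)\to\Real^2$.

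The main obstacle is the gluing step in the converse direction. Across a splitting torus the two adjacent Seifert fibrations generically induce different slopes, so the regular fibers of the two locally constructed maps meet the torus in curves of distinct slopes and cannot be matched directly. The heart of the argument is to interpolate between the two slopes on a collar $T \times [0,1]$ using only a band of indefinite folds, so that the two fibrations are connected without creating any type $\mathrm{II}^2$ or $\mathrm{II}^3$ fiber. Verifying that such a vertex-free interpolation exists for every pair of slopes, and that the global map remains stable after all the gluings, is the delicate technical core; in the forward direction the corresponding subtlety is checking that the gluing tori are incompressible, so that the decomposition genuinely exhibits $E(L)$ as a graph manifold.
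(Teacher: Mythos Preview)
The paper does not contain a proof of this theorem: it is stated with attribution to Saeki \cite{Sae96} and used as a black box, so there is no argument in the paper to compare your proposal against.

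That said, your sketch is broadly in the spirit of Saeki's original argument, and your forward direction is essentially correct: absent codimension-$2$ fibers the Stein factorization has no vertices, and cutting along the triple-point locus yields a decomposition of $M$ (and hence $E(L)$) into $S^1$-fibered pieces glued along tori. For the converse, however, you have correctly located the difficulty but not resolved it. The interpolation on a collar $T\times[0,1]$ between two distinct slopes is precisely the nontrivial content of Saeki's construction; one must actually produce, for any pair of slopes, an explicit stable map on $T^2\times[0,1]$ with only definite and indefinite folds (no codimension-$2$ fibers) whose regular fibers near the two ends realize the prescribed slopes. Merely asserting that ``a band of indefinite folds'' suffices is not a proof: one has to exhibit the fold pattern (or equivalently the Stein factorization) on the collar and check that no $\mathrm{II}^2$ or $\mathrm{II}^3$ fibers are forced. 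Similarly, the treatment of exceptional fibers by ``standard local models'' needs to be made precise. As written, the converse direction is an outline with its key lemma left unproved.
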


By Theorem~\ref{thm:Saeki's theorem on graph manifolds}, 
any stable map of a hyperbolic link $L$ in a closed orientable 3-manifold 
has at least one singular fiber of type $\mathrm{II}^2$ or $\mathrm{II}^3$. 
The following theorem by Ishikawa-Koda \cite[Theorem~5.6]{IK17} 
gives a complete characterization of 
the hyperbolic links in $S^3$ admitting stable maps into $\Real^2$ 
with a single singular fiber of type $\mathrm{II}^{2}$ and 
no one of type $\mathrm{II}^{3}$. 

\begin{theorem}[Ishikawa-Koda \cite{IK17}]
\label{thm:classification by Ishikawa-Koda}
Let $L$ be a hyperbolic link in $S^3$. 
Then there exists a stable map $f:(S^3,L) \to \Real^2$ with 
$|\mathrm{II}^{2}(f)| = 1$ and  $\mathrm{II}^{3}(f) = \emptyset$ if and only if 
the exterior of $L$ is diffeomorphic to
a $3$-manifold obtained by Dehn filling the exterior of one of the 
six links $L_1, L_2, \ldots, L_6$ in $S^3$ along some of 
$($possibly none of$)$ boundary tori, 
where $L_1, L_2, \ldots, L_6$ are 
depicted in Figure $\ref{figure:Ishikawa-Kodalink}$. 
\end{theorem}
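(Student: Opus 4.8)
The plan is to translate the problem into the language of branched shadows via the Stein factorization, following the dictionary set up in Costantino-Thurston \cite{CT08} and Ishikawa-Koda \cite{IK17}. Given a stable map $f : (S^3, L) \to \Real^2$, I would first form its Stein factorization $Q_f$ and then excise the part corresponding to a regular neighborhood $\Nbd(L)$ of $L$; by the local models of Figure \ref{figure:stablemap_fiber} this yields a branched simple polyhedron that serves as a shadow of the exterior $E(L)$. Under this correspondence the true vertices of the shadow are in bijection with the codimension-$2$ singular fibers of $f$, and the two vertex types---$\mathrm{II}^2$ and $\mathrm{II}^3$---record the two possible branchings at a vertex. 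Thus the hypotheses $|\mathrm{II}^2(f)| = 1$ and $\mathrm{II}^3(f) = \emptyset$ say precisely that $E(L)$ admits a branched shadow with a single vertex, of the branching type carried by a $\mathrm{II}^2$ fiber. Since $L$ is hyperbolic, $E(L)$ is not a graph manifold, so by Theorem \ref{thm:Saeki's theorem on graph manifolds} its shadow cannot be vertex-free; this confirms, as a consistency check, that the shadow complexity of $E(L)$ is exactly one.

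For the ``only if'' direction the heart of the argument is a finite combinatorial enumeration. First I would write down all local models of a branched simple polyhedron in a neighborhood of a single $\mathrm{II}^2$-vertex, recording the co-orientations of the three edges and the three regions incident to it. Next I would classify, up to the evident symmetries, all ways in which these regions and their boundary arcs can be glued up to close the polyhedron without creating a second vertex; keeping track of which boundary circles arise from definite folds distinguishes the components of $L$ from the auxiliary regions. Because there is a single vertex and no $\mathrm{II}^3$-vertex, this produces only finitely many branched polyhedra, and to each I would apply the standard reconstruction to recover the underlying $3$-manifold. The definite-fold circles bounding regions in the polyhedron correspond to boundary tori of $E(L)$ along which Dehn fillings may be inserted; allowing or disallowing each such filling organizes the reconstructed manifolds as fillings of a small number of maximal exteriors, which accounts for the ``Dehn filling along some boundary tori'' clause. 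Finally I would identify these maximal exteriors with those of $L_1, \dots, L_6$ by matching triangulations and hyperbolic invariants.

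For the ``if'' direction I would argue constructively. For each $L_i$ the branched shadow obtained above can be inverted, by the same dictionary, to an explicit stable map $(S^3, L_i) \to \Real^2$ with a single $\mathrm{II}^2$ fiber and no $\mathrm{II}^3$ fiber. To absorb the Dehn fillings I would use that filling a boundary torus amounts to gluing in a solid torus, and a solid torus admits a stable map into a half-plane whose only singularities are definite folds; splicing this piece onto the given map along the corresponding definite-fold boundary yields a stable map of the filled manifold that introduces no new codimension-$2$ fiber. Hence the equalities $|\mathrm{II}^2| = 1$ and $\mathrm{II}^3 = \emptyset$ are preserved under all admissible fillings.

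I expect the main obstacle to be the combinatorial enumeration together with the manifold recognition that follows it. The enumeration must be simultaneously exhaustive and free of redundancy, and one must rule out gluings that secretly force an extra vertex or fail to be realizable as a genuine branched shadow. The recognition step is then delicate: certifying that each reconstructed manifold is exactly one of the six link exteriors (up to Dehn filling) requires a rigorous comparison of hyperbolic structures---volumes and cusp shapes---rather than a purely combinatorial check.
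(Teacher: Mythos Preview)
The paper does not prove this theorem; it is quoted as a result of Ishikawa--Koda \cite{IK17}. Your outline nonetheless matches the strategy the paper employs for the analogous Theorem~\ref{thm:main theorem}, which is in turn modeled on \cite{IK17}, so the broad shape is right.

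One correction: your assertion that ``the two vertex types $\mathrm{II}^2$ and $\mathrm{II}^3$ record the two possible branchings at a vertex'' is mistaken. A type-$\mathrm{II}^3$ point of the Stein factorization is not a vertex of a simple polyhedron at all---its local model (Figure~\ref{figure:steindecomodel}(iv)) is not among those of Figure~\ref{figure:simplepoly}---and that is precisely why Theorem~\ref{thm:from a Stein factorization to a branched shadow} replaces each such point by the two-vertex piece $X$. In the present situation $\mathrm{II}^3(f)=\emptyset$, so $Q_f$ is already a genuine branched polyhedron with a single vertex and the slip is harmless, but it is a misconception worth fixing before you attack the $\mathrm{II}^3$ case.

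One methodological difference from the argument actually used in \cite{IK17} (and mirrored in Section~\ref{sec:The main theorem} here): rather than enumerating all global closings of a one-vertex neighborhood and then certifying the resulting manifolds via hyperbolic invariants, the proof invokes \cite[Lemma~5.5]{IK17} to reduce any branched shadow of $E(L)$ to one of the form $\Nbd(c;P)$ with \emph{towers} attached along some boundary circles, lists the finitely many branched shapes of $\Nbd(c;P)$ directly, and identifies each with the exterior of an explicit link by exhibiting it as $P_{D_L}$ for a concrete diagram $D_L$ (as in Example~\ref{ex:branched shadows of links} and Claim~\ref{claim:P1, P2,...,P5 re branched shadows of L1, L2,...,L5}). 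The towers then correspond exactly to Dehn fillings by \cite[Proposition~3.27]{CT08}, which also supplies the ``if'' direction once combined with Theorem~\ref{thm:bsc is equal to smc}. This replaces both the hyperbolic-invariant recognition and your ad hoc ``splice in a solid-torus map'' step by uniform shadow moves, and it is what makes the enumeration tractable and the identification rigorous.
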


\begin{figure}[htbp]
\centering\includegraphics[width=10cm]{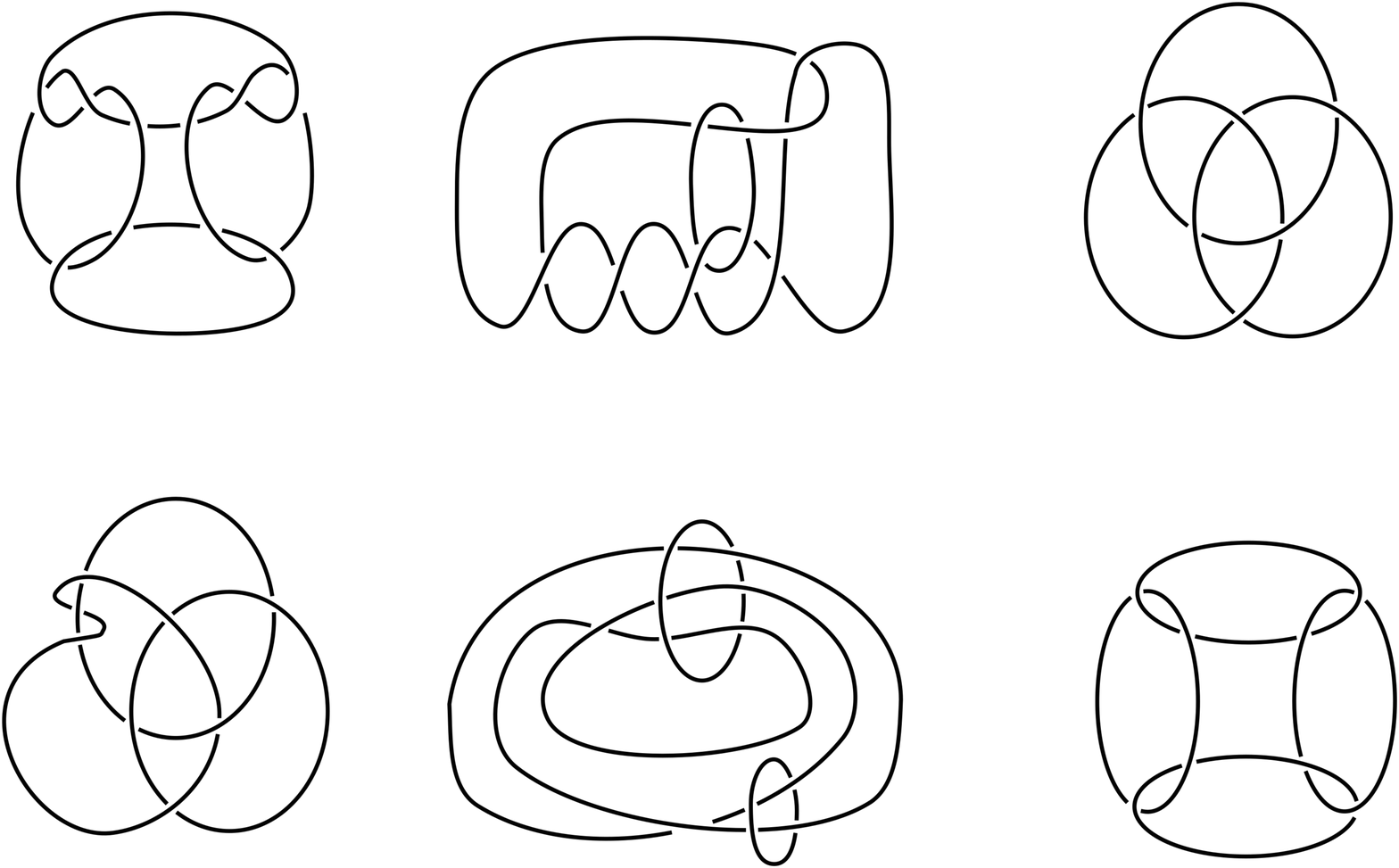}
\begin{picture}(400,0)(0,0)
\put(95,100){$L_{1}$}
\put(190,100){$L_{2}$}
\put(295,100){$L_{3}$}
\put(95,5){$L_{4}$}
\put(190,5){$L_{5}$}
\put(295,5){$L_{6}$}
\end{picture}
\caption{The links $L_{1}, L_{2}, \ldots, L_{6}$.}
\label{figure:Ishikawa-Kodalink}
\end{figure}

We note that the links $L_1, L_2, \ldots, L_6 \subset S^3$ in 
Theorem~\ref{thm:classification by Ishikawa-Koda} are all hyperbolic 
 of volume $ 2 v_{\mathrm{oct}}$, where 
$v_{\mathrm{oct}} =  3.6638 \cdots$ is the volume of the ideal regular octahedron. 
See the Appendix of this paper. 

\subsection{Branched shadows}
\label{subsec:Branched shadows} 

A compact, connected polyhedron $P$ is called 
a {\it simple polyhedron} if 
every point of $P$ has a regular neighborhood homeomorphic to 
one of the four models 
shown in Figure \ref{figure:simplepoly}. 
\begin{figure}[htbp]
\centering\includegraphics[width=12cm]{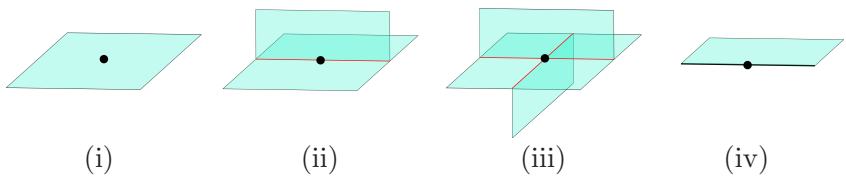}
\begin{picture}(400,0)(0,0)
\put(68,5){(i)}
\put(150,5){(ii)}
\put(233,5){(iii)}
\put(310,5){(iv)}
\end{picture}
\caption{The local models of a simple polyhedron.}
\label{figure:simplepoly}
\end{figure}

A point whose regular neighborhood is shaped on the model 
(iii) is called a {\it vertex} of $P$, and we denote the set of vertices of $P$  by $V(P)$. 
The set of points whose regular neighborhoods are shaped on the models (ii) or (iii) 
is called the {\it singular set} of $P$, 
and we denote it by $S(P)$. 
A component of $S(P) - V(P)$ is called an {\it edge} of $P$. 
We note that an edge is homeomorphic to either an open interval or a circle. 
The set of points whose regular neighborhoods are shaped on the model  (iv) is called the {\it boundary} of $P$ and 
we denote it by $\partial P$. 
Each component of $X - S(P)$ 
is called a \textit{region}  of $P$. 
A region is said to be {\it internal} if it does not touch the boundary of $P$. 

A {\it branching} of a simple polyhedron $P$ is 
the set of orientations on regions of $P$ such that 
the orientations on each edge of $P$ 
induced by the adjacent regions do not coincide.
A branching of $P$ allows us to smoothen $P$ as shown in the local models in 
Figure \ref{fig:branchedpoly}. 
\begin{figure}[htbp]
\centering\includegraphics[width=12cm]{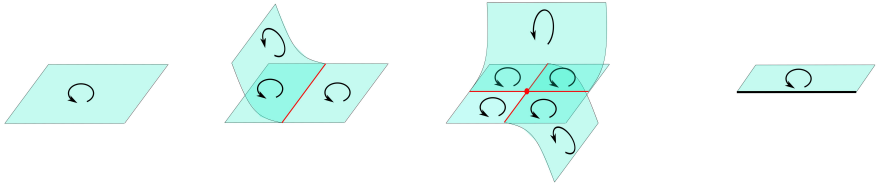}
\caption{The local models of a branched polyhedron.}
\label{fig:branchedpoly}
\end{figure}
We call a simple polyhedron equipped with a branching a {\it branched polyhedron}. 

Let $P$ be a simple polyhedron. 
A {\it coloring} of $\partial P$  is a map 
from the set of components of $\partial P$ to the set $\{ i, e, f \}$ of three elements 
$i$ (internal), $e$ (external), and $f$ (false). 
Then with respect to the coloring, $\partial P$ decomposes into three 
peaces $\partial _i P$, $\partial _e P$, and $\partial_f P$. 
A simple polyhedron is said to be {\it boundary-decorated} if 
it is equipped with a coloring of $\partial P$. 

\begin{definition}
Let $M$ be a compact orientable $3$-manifold with $($possibly empty$)$ boundary
consisting of tori. 
Let $L$ be a $($possibly empty$)$ link in $M$. 
A boundary-decorated simple polyhedron $P$ 
embedded in a compact oriented smooth $4$-manifold $W$ is called a {\it shadow} of $(M, L)$ if 
\begin{enumerate}
\item
$P$ is embedded in $W$ properly and smoothly, that is, 
$P \cap \partial W = \partial P$, and for each point $p$ in $P$, there exists a chart $(U, \varphi)$ of $W$ 
centered at $p$ 
such that $\varphi (U \cap P) \subset \{0\} \times \Real_{+}^{3} \subset \Real_{+}^{4}$ and 
$(\{0\} \times \Real_{+}^{3}, \varphi (U \cap P))$ is one of the four models in Figure \ref{figure:simplepoly} 
naturally regarded as being embedded in $\Real_{+}^{3}$. 
Here we set $\Real^{4}_{+} :=\{(x_{1},x_{2},x_{3},x_{4}) \in \Real^{4} \mid x_{4}\geq 0 \}$ and 
$\{0\} \times \Real^{3}_{+} := (\{0\} \times \Real^{3}) \cap \Real^{4}_{+} $. 
\item
$W$ collapses onto $P$ after equipping the natural PL structure on $W$; 
\item
$(M, L) = ( \partial W - \Int \Nbd (\partial_e P; \partial W),  \partial_i P)$. 
\end{enumerate}
In particular, when $P$ is a branched polyhedron, we call $P$ a {\it branched shadow} of $(M, L)$. 
\end{definition}

Under  the above definition, we sometimes call $P$ a {\it shadow} of the $4$-manifold $W$. 
We note that a shadow of $E(L)$ is obtained from that of $(M, L)$ by just replacing 
the color $i$ (internal) with $e$ (external) of the boundary-decoration of $P$, but not vice versa. 
We denote by $\pi_P$ the map $M \to P$ 
obtained by restricting the collapsing map $W \searrow P$ 
in the above definition to $M$. 
We note that when $\partial M = \emptyset$, or equivalently, when $\partial_e (P) = \emptyset$, 
$\pi_P$ is a projection. 
We can assume that the map  $\pi_P :  M \to P$ satisfies the following: 
\begin{itemize}
\item
for each $p \in \partial_i P \cup \partial_f P$, $\pi_P^{-1}(p)$ is a single point; 
\item
for each $p \in V(P)$, $\pi_P^{-1} (p) \cong T_4$, where $T_4$ is the suspension of four points; 
\item
for each $p \in S(P) - V(P)$, $\pi_P^{-1} (p) \cong T_3$, where $T_3$ is the suspension of three points; and 
\item
the restriction of $\pi_P$ to the preimage $\pi_P^{-1} (\pi_P(P) - S(P))$ is a smooth $S^1$-bundle 
over $\pi_P(P) - S(P)$. 
\end{itemize}

By Turaev \cite{Tur92, Tur94}, any pair $(M , L)$  
of a compact orientable $3$-manifold $M$ with $($possibly empty$)$ boundary
consisting of tori and a $($possibly empty$)$ link $L $ in 
$M$ has a shadow, see also Costantino \cite{Cos05}. 
As we will see in Example \ref{ex:branched shadows of links} below, when $L$ is a link in $S^3$, 
there is a standard way to obtain its branched shadow from a diagram of $L$.

Let $L$ be a $($possibly empty$)$ link in 
a compact orientable $3$-manifold $M$ with $($possibly empty$)$ boundary
consisting of tori. 
Let $P \subset W$ be a shadow of $(M, L)$. 
To each internal region $R$ of $P$, we may assign a half-integer $\gl (R)$, called a {\it gleam}, 
as follows. 
Let $\iota : R \hookrightarrow W$ be the inclusion.  
Let $\bar{R}$ be the metric completion of $R$ with the path metric inherited from a Riemannian metric on $R$. 
For sinplicity, suppose that the natural extension $\bar{\iota} : \bar{R} \to M$ is injective. 
The germs $\Nbd(\bar{R}; P) - P$ of the remaining regions near the circles $\partial \bar{R}$ 
give a structure of interval bundle over $\partial \bar{R}$, 
which is a sub-bundle of the normal bundle of $\partial \bar{R}$ in $W$. 
Let $\bar{R}'$ be a generic small perturbation of $\bar{R}$ in $W$ 
such that $\partial \bar{R}'$ lies in the interval
bundle. 
The gleam $\gl (R)$ is then (well-)defined by counting the finitely many 
isolated intersections of $\bar{R}$ and $\bar{R}'$ with signs 
as follows: 
\[
\gl (R) = \frac{1}{2} | \partial \bar{R} \cap \partial \bar{R}' | + | \Int\bar{R} \cap \Int\bar{R}' | \in \frac{1}{2} \Integer . 
\]
We call a polyhedron $P$ equipped with a gleam on each internal region a {\it shadowed polyhedron}.  
In \cite{Tur92, Tur94}, Turaev showed that the 4-manifold $W$, the 3-manifold $M \subset \partial W$ and 
the link $L \subset M$ are all recovered from a shadowed, boundary-decorated polyhedron $P$ in a canonical way.

\begin{example}
\label{ex:branched shadows of links}
Let $L = L_1 \sqcup L_2 \sqcup \cdots \sqcup L_n$ be an $n$-component link in $S^3$. 
Using a diagram of $L$, we can construct a shadow of $(S^3, L)$ as in the following standard way. 

We think of $S^3$ as the boundary of the $4$-ball 
$D^4 = \{ (z, w) \in \Complex^2 \mid |z|^2 + |w|^2 \leq 1 \} $. 
Set $D := \{ (z, w) \in D^4 \mid w=0 \} $, which is an oriented $2$-disk. 
Let $D'$ be the closure of $D - \Nbd (\partial D; D) $. 
Let $\pi : S^3 \to D$ be the projection induced by a collapsing $D^4 \searrow D$. 
We can assume without loss of generality that 
the preimage $V = \pi^{-1} (D')$ is a solid torus, which we identify with $D' \times S^1$. 
We note that $S^3 - \Int V$ is also a solid torus. 
We can push $L$ by isotopy so that $L$ lies in $D' \times [[-\varepsilon], [\varepsilon]]$, where 
$S^1$ is identified with $\Real / 2 \pi \Integer$,  
$\varepsilon$ is a sufficiently small positive real number, and 
we assume that $\pi |_{V}$ is generic with respect to 
$L$. 
Thus, the image $\pi (L)$, together with over/under crossing information at each double point 
of $\pi (L)$, gives a 
diagram $D_L$ of $L$ on $D'$. 
Consider the mapping cylinder 
\[ P_{D_L}^* = ( (L \times [0,1]) \sqcup D' ) / (x, 0)  \sim \pi (x) . \] 
Then $P_{D_L}^*$, together with the color $e$ (external) of each $L_j \times \{ 1 \} \subset \partial P_L^*$ 
and the color $f$ (false) of the remaining boundary circle, 
is a shadow of $(S^3, L)$. 
The set of the regions of $P_{D_L}^*$ consists of 
subsurfaces of $D'$ and half-open annuli $A_j = L_j \times (0,1]$, $j \in \{ 1,2, \ldots, n\}$. 
The gleam on each internal region of $P_{D_L}^*$, which lies in $D'$, is obtained by counting 
the local contributions around crossings as indicated in Figure \ref{figure:example_PDHstar}. 
Refer to Turaev \cite{Tur94} and Costantino \cite{Cos05} for more details. 

\begin{figure}[htbp]
\centering\includegraphics[width=12cm]{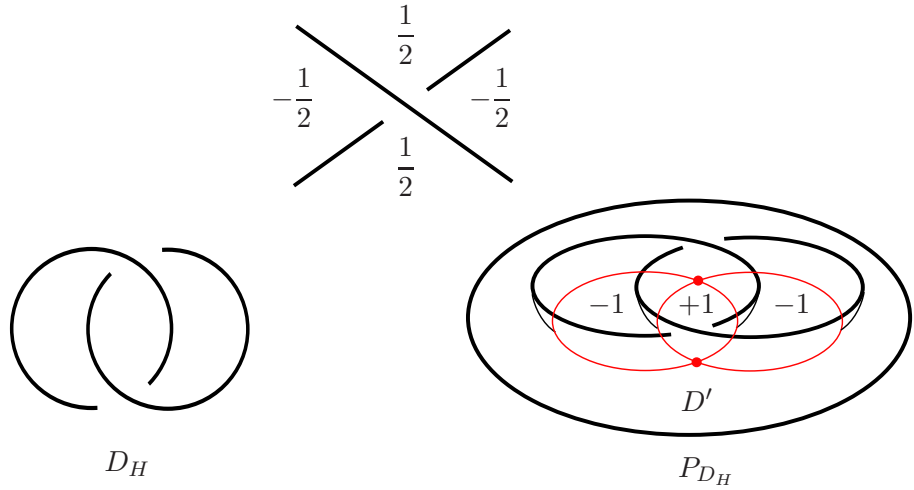}
\begin{picture}(400,0)(0,0)
\put(67,0){$D_{H}$}
\put(284,-2){$P_{D_{H}}$}

\put(205,138){$-\dfrac{1}{2}$}
\put(130,138){$-\dfrac{1}{2}$}
\put(177,162){$\dfrac{1}{2}$}
\put(177,112){$\dfrac{1}{2}$}

\put(285,23){$D'$}

\put(250,60){$-1$}
\put(284,60){$+1$}
\put(320,60){$-1$}

%
%
%
\end{picture}
\setlength\abovecaptionskip{15pt}
\caption{The diagram $D_H$ of the Hopf link $H$ and the branched shadow $P_{D_H}^*$.}
\label{figure:example_PDHstar}
\end{figure}

To each region of $P_{D_L}^*$ contained in $D'$, 
we give the orientation induced by the prefixed orientation of $D'$.  
By equipping, further, orientations to the other regions $A_1, A_2, \ldots, A_n$ in an arbitrary way, 
$P_{D_L}^*$ becomes a branched shadow of $(S^3, L)$. 
Let $R$ be the region of $P_{D_L}^*$ touching $\partial D'$. 
Suppose that the branched polyhedron $P_{D_L}^*$ satisfies the following: 
\begin{itemize}
\item
the closure of $R$ is an annulus; and 
\item
on each edge of $P_{D_L}^*$ touched by $R$ and $A_j$ ($j \in \{ 1, 2, \ldots, n\}$), 
the orientations induced from those of 
$R$ and $A_j$ coincide. 
\end{itemize}
We note that if we choose, for example, $D_L$ to be a closed braid presentation of $L$, 
we can find a branching of $P_{D_L}^*$ satisfying the above conditions. 
Then, $P_{D_L} := P_{D_L}^* - R$ is still a branched polyhedron, 
thus, a branched shadow of $(S^3, L)$. 
See Figure \ref{figure:example_from_PDHstar_to_PHD}. 
\begin{figure}[htbp]
\centering\includegraphics[width=11cm]{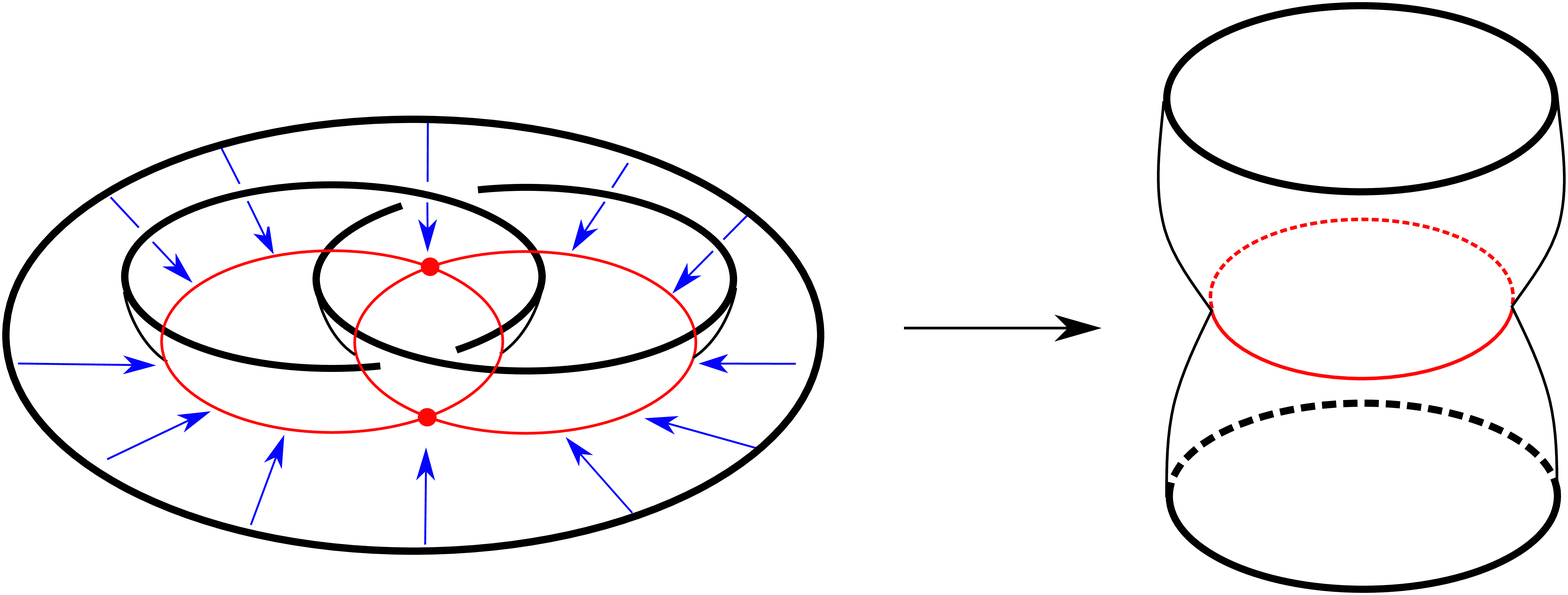}
\begin{picture}(400,0)(0,0)
\put(120,0){$D_{H}$}

\put(89,67){$-1$}
\put(123,67){$+1$}
\put(157,67){$-1$}
\put(110,32){$R$}

\put(307,-2){$P_{D_{H}}$}
\put(311,70){$+1$}

%
%
\end{picture}
\setlength\abovecaptionskip{15pt}
\caption{From $P_{D_H}^*$ to $P_{D_H}$.}
\label{figure:example_from_PDHstar_to_PHD}
\end{figure}
\end{example}

Let $M$ be a compact orientable $3$-manifold $M$ with $($possibly empty$)$ boundary
consisting of tori. 
Let $P \subset W$ be a shadow of $M = (M, \emptyset)$. 
Suppose for simplicity that all components of $\partial P$ is colored by $i$ (internal), and 
no component of $S(P)$ is a circle. 
Then the combinatorial structure of $P$ induces a decomposition of $P$ into pieces 
each of which is homeomorphic to one of 
\begin{enumerate}
\item
the model in Figure \ref{figure:simplepoly} (iii); 
\item
the model in Figure \ref{figure:simplepoly} (ii); or
\item
a compact surface $F$.
\end{enumerate}
Here, the pieces of (1) one-to-one correspond to the vertices of $P$,  
the pieces of (2) one-to-one correspond to the edges of $P$, and   
the pieces of (3) one-to-one correspond to the regions of $P$. 
We call this the {\it combinatorial decomposition} of $P$. 
The combinatorial decomposition then induces, through the 
preimage of the map $\pi_P: M \to P$, a decomposition of $M$ into pieces 
each of which is homeomorphic to one of 
\begin{enumerate}
\item
a handlebody of genus three; 
\item
a handlebody of genus two; or 
\item
an orientable $S^1$ bundle over a compact surface.
\end{enumerate}
We note that each piece of $(2)$ here is naturally equipped with the 
product structure $R \times [0,1]$, where $R$ is a pair of pants, 
through $\pi_P^{-1}$. 

\section{Relationship between Stein factorizatons and branched shadows}
\label{sec:Relationship between Stein factorizatons and branched shadows}

From now until the end of the paper, $X$ always denotes the local part of a branched, shadowed polyhedron depicted in 
Figure \ref{figure:X}, where $+1$ in the figure is the gleam on the bigon.

\begin{figure}[htbp]
\centering\includegraphics[width=8cm]{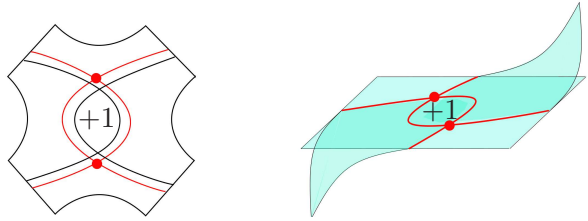}
\begin{picture}(400,0)(0,0)
\put(115,47){+1}
\put(245,50){+1}
\end{picture}
\caption{The local part $X$ of a branched, shadowed polyhedron.}
\label{figure:X}
\end{figure}

Let $M$ be a closed orientable $3$-manifold. 
As we have seen in Section \ref{subsec:Stable maps}, 
each point of the Stein factorization of a stable map of $M$ into $\Real^2$ is homeomorphic to one of the four local models shown in Figure 
$\ref{figure:steindecomodel}$. 
\begin{figure}[htbp]
\centering\includegraphics[width=12cm]{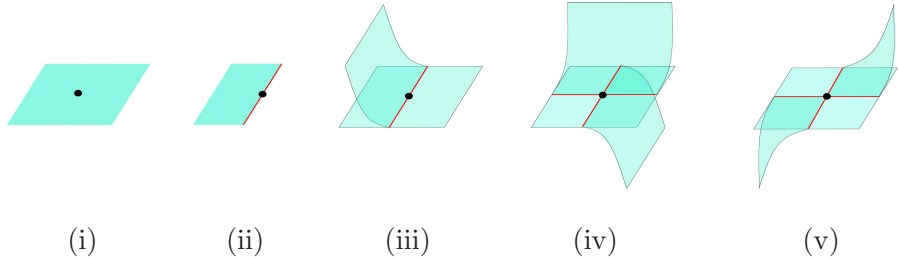}
\begin{picture}(400,0)(0,0)
\put(55,5){(i)}
\put(115,5){(ii)}
\put(175,5){(iii)}
\put(246,5){(iv)}
\put(333,5){(v)}
\end{picture}
\caption{The local models of a Stein factorization.}
\label{figure:steindecomodel}
\end{figure}
From the figure, we see that the Stein factorization is ``almost" a branched polyhedron.  
Indeed, the local models in Figure \ref{figure:steindecomodel} coincide with 
those in Figure \ref{fig:branchedpoly} except 
the model of Figure \ref{figure:steindecomodel} (iv), which is a regular  
neighborhood of a vertex of type $\mathrm{II}^3$. 
Costantino-Thurston \cite[Section 4]{CT08} revealed that 
we can actually obtain a branched shadow from 
the Stein factorization of a stable map by replacing a regular  
neighborhood of each vertex of type $\mathrm{II}^3$ with $X$.  

\begin{theorem}[Costantino-Thurston \cite{CT08}]
\label{thm:from a Stein factorization to a branched shadow}
Let $L$ be a link in a closed orientable $3$-manifold $M$. 
Let $f: (M, L) \to \Real^2$ be a stable map. 
Then a branched polyhedron $P$ obtained by replacing a regular  
neighborhood of each vertex of type $\mathrm{II}^3$ of $Q_f$ with 
$X$ as shown in Figure $\ref{figure:II3X}$ is a branched shadow of $(M, L)$. 
\end{theorem}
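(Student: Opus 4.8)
The plan is to prove this by a careful local analysis of the Stein factorization $Q_f$ together with its gleams, showing that the prescribed surgery on $\mathrm{II}^3$-vertices produces a genuine branched shadow encoding the same $(M,L)$.

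First I would establish that away from the $\mathrm{II}^3$-vertices, the Stein factorization $Q_f$ already carries all the structure of a branched shadowed polyhedron. Comparing Figure \ref{figure:steindecomodel} with Figure \ref{fig:branchedpoly}, the local models (i), (ii), (iii), and (v) of $Q_f$ are literally the smoothened local models of a branched polyhedron, so the orientations on the sheets coming from the stable map (recall that the source $M$ is oriented and the fold loci inherit coorientations) define a branching on $Q_f$ minus small neighborhoods of the $\mathrm{II}^3$-vertices. Since $L$ sits in the definite fold locus, the corresponding boundary circles of $Q_f$ are to be colored $i$ (internal), and the remaining boundary receives color $f$ (false); there is no external boundary because $M$ is closed. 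The quotient map $q_f:M\to Q_f$ already has exactly the $S^1$-bundle/suspension fiber structure required of $\pi_P$ in the definition of a shadow, which is the content of Kushner--Levine--Porto \cite{KLP84} and Levine \cite{Lev85}.

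Next I would treat the single non-matching piece, the regular neighborhood of a $\mathrm{II}^3$-vertex (model (iv)), and verify that replacing it by $X$ as in Figure \ref{figure:II3X} is compatible on the overlap and yields the correct reconstructed data. The key computation is to check two things. First, that the replacement is consistent with the branching along the boundary of the excised neighborhood, so that $X$ glues to the rest of $Q_f$ to form an honest branched simple polyhedron. Second, and this is the crucial point, that the gleam $+1$ placed on the bigon of $X$ is exactly the value needed so that Turaev's reconstruction from the shadowed, boundary-decorated polyhedron returns the original oriented $4$-manifold $W$ (into which $Q_f$ thickens), and hence returns the original pair $(M,L)=(\partial W - \Int\Nbd(\partial_e P;\partial W),\partial_i P)$ via condition (3) of the definition of a shadow. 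I would verify this by a direct local framing computation: the $S^1$-bundle over the region carrying the vertex, read off from $q_f^{-1}$, determines the self-intersection data, and the suspension-of-four-points fiber $\pi_P^{-1}(p)\cong T_4$ over the new vertex matches the $\mathrm{II}^3$ fiber recorded in Figure \ref{figure:stablemap_fiber}.

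I would then assemble these local verifications into the global statement: the resulting $P$ collapses to it from a $4$-manifold (condition (2)) precisely because $Q_f$ does, the embedding and local models (condition (1)) are inherited from the stable-map structure away from the modified vertices and imposed by $X$ at them, and the boundary decoration recovers $(M,L)$ by the gleam check above. The main obstacle I anticipate is the gleam bookkeeping at the $\mathrm{II}^3$-vertex: one must show that the local self-framing contributed by the original vertex neighborhood of model (iv) is transported correctly under the surgery, and that no global gleam correction on the surrounding regions is forced. This amounts to tracking how the normal framing of the vertical $S^1$-fibers changes across the replaced neighborhood, and confirming it agrees with the framing prescribed by the $+1$ bigon gleam of $X$; everything else is a routine matching of local pictures. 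Since this is exactly the computation carried out in Costantino--Thurston \cite[Section 4]{CT08}, I would cite their local analysis for the gleam value and restrict my argument to checking that their replacement is compatible with the branching and boundary decoration adapted to our link setting.
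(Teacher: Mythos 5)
Your outline is sound, but note first that the paper contains no proof of this statement at all: Theorem~\ref{thm:from a Stein factorization to a branched shadow} is quoted from Costantino--Thurston, and the paper adds only the remark that the $4$-manifold $W$ is built naturally from $f$, that $Q_f$ is already embedded in it, and that the replacement is performed inside $W$. Your plan reconstructs exactly that route --- local models of $Q_f$ match those of a branched polyhedron except at $\mathrm{II}^3$-vertices, boundary decoration by $i$ on the circles over $L$ and $f$ elsewhere, surgery to $X$ at each $\mathrm{II}^3$-vertex, and a framing/gleam check --- and at the crux you explicitly defer to \cite[Section 4]{CT08}, so your write-up is in effect a guided citation, which is also how the paper treats the result. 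Two points are worth flagging. First, your stated mechanism for the branching (orientation of $M$ plus coorientations of the fold loci) is not quite the right one: the regions of $Q_f$ immerse into $\Real^2$ under $\bar{f}$, and the branching is obtained by pulling back the orientation of $\Real^2$; the condition that adjacent orientations disagree along each edge follows from the local model of an indefinite fold, where two of the three sheets map to the same side of the fold image. Second, the local verification you postpone --- that the fibered structure of $q_f$ over $\partial X$ agrees with that over the boundary of the $\mathrm{II}^3$-vertex neighborhood, both preimages being genus-three handlebodies identified after a handle slide, which is what forces the gleam $+1$ on the bigon --- is precisely the computation this paper does carry out, in the reverse direction, in the proof of Corollary~\ref{cor:replacement stable maps} (see Figures~\ref{figure:Xfiber}, \ref{figure:stein_II3} and \ref{figure:handleslide}); pointing there would let you replace the bare citation with an explicit check and would make your one genuinely nontrivial step self-contained.
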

\begin{figure}[htbp]
\centering\includegraphics[width=8cm]{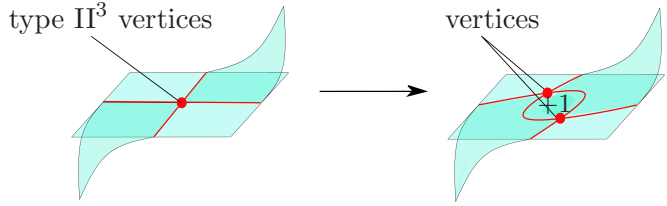}
\begin{picture}(400,0)(0,0)
\put(265,48){{\small $+1$}}
\put(65,80){type $\mathrm{II}^{3}$ vertices}
\put(230,80){vertices}
\end{picture}
\caption{Replacement of a neighborhood of type $\mathrm{II}^{3}$ vertex with $X$.}
\label{figure:II3X}
\end{figure}

In fact, in Theorem~\ref{thm:from a Stein factorization to a branched shadow}
the $4$-manifold $W$ containing $P$ as a shadow can be constructed 
from $f$ in a natural way, and $Q_f$ is already naturally embedded in $W$.  
The replacement in Theorem~\ref{thm:from a Stein factorization to a branched shadow} 
can then be performed in $W$. 

The following theorem by Ishikawa-Koda 
\cite[Theorem~3.5, Lemma~3.12 and Corollary~3.13]{IK17} is a sort of inverse of 
Theorem~ \ref{thm:from a Stein factorization to a branched shadow}: 
we can construct a stable map from a branched shadow.

\begin{theorem}[Ishikawa-Koda \cite{IK17}]
\label{thm:bsc is equal to smc}
Let $L$ be a link in a closed orientable $3$-manifold $M$. 
Let $P$ be a branched shadow of $E(L)$. 
Then there exists a stable map $f : (M,L) \to \Real^{2}$ with 
$|\mathrm{II}^{2}(f)| = |V(P)|$ and $\mathrm{II}^{3}(f) = \emptyset$. 
\end{theorem}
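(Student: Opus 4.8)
The plan is to reverse the passage of Theorem~\ref{thm:from a Stein factorization to a branched shadow}: I want to realize the given branched shadow $P$ as, in essence, the Stein factorization of a stable map. The decisive observation is the one recorded just before that theorem, namely that the local models of a branched polyhedron (Figure~\ref{fig:branchedpoly}) agree with those of a Stein factorization (Figure~\ref{figure:steindecomodel}) at every point \emph{except} at a vertex of type $\mathrm{II}^3$. Since $P$ is an honest branched polyhedron, every vertex of $P$ carries the local model of a type $\mathrm{II}^2$ vertex and none carries the type $\mathrm{II}^3$ model. Consequently, if I can produce a stable map $f$ whose Stein factorization is isomorphic, as a branched polyhedron, to $P$, then automatically $|\mathrm{II}^2(f)| = |V(P)|$ and $\mathrm{II}^3(f) = \emptyset$, which is exactly the assertion.

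To build such an $f$, I would start from the collapsing map $\pi_P \colon E(L) \to P$ attached to the shadow $P \subset W$ (Section~\ref{subsec:Branched shadows}): it is a smooth $S^1$-bundle over $P - S(P)$, has fiber $T_3$ over each edge and $T_4$ over each vertex of $P$, and collapses to a single point over $\partial_i P \cup \partial_f P$. I would then choose a generic map $g \colon P \to \Real^2$ adapted to the branching, so that the prescribed orientations of the regions fix the direction in which $P$ is smoothed, and set $f := g \circ \pi_P$ over $E(L)$. This map is assembled piece by piece over the combinatorial decomposition of $P$ recalled at the end of Section~\ref{subsec:Branched shadows}: over a region piece, an $S^1$-bundle over a surface $F$, one uses the regular and definite-fold normal forms $(1)$ and $(2)$; over an edge piece, $R \times [0,1]$ with $R$ a pair of pants, one uses the indefinite-fold form $(3)$; and over a vertex piece, a genus-three handlebody, one installs the standard model whose central fiber is a singular fiber of type $\mathrm{II}^2$. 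Finally I would pass from $E(L)$ to $M$ by filling the boundary tori coming from $\partial_e P$ and extending $f$ over the filling solid tori by the definite-fold model, so that their cores --- which constitute $L$ --- lie in the definite-fold set; together with the definite folds produced over $\partial_f P$, this makes $f$ a stable map of $(M,L)$ in the sense of Definition~\ref{def:stable maps for links}.

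The crux I expect is the global coherence of these local pieces, and in particular the faithful realization of the gleams. Each internal region $R$ carries a gleam $\gl(R) \in \frac{1}{2}\Integer$ recording the self-framing of the corresponding $S^1$-bundle inside $W$, and the circle fibers of $\pi_P$ must be twisted by exactly this amount for the candidate map to assemble into a stable map on the correct $3$-manifold, a half-integer gleam being accounted for by the identification of fibers along a definite-fold edge. Verifying that the smoothings dictated by the branching fit together consistently around every vertex and along every edge --- so that the assembled $f$ genuinely satisfies the local forms $(1)$--$(4)$ and the global conditions $(5)$--$(6)$ of stability, and that each vertex is realized by a type $\mathrm{II}^2$ fiber rather than a type $\mathrm{II}^3$ one --- is the substantive part of the argument, carried out in Ishikawa-Koda \cite[Theorem~3.5, Lemma~3.12, Corollary~3.13]{IK17}.
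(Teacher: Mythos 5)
The paper itself gives no proof of Theorem~\ref{thm:bsc is equal to smc}: it is imported from Ishikawa--Koda \cite{IK17}, with only the remark that the proof there is constructive. Your final paragraph defers the substantive verification to exactly that citation, so what can be judged is the mechanism you propose, and that mechanism has a genuine flaw. You reduce the theorem to producing a stable map $f$ whose Stein factorization $Q_f$ is isomorphic to $P$ as a branched polyhedron, via $f = g \circ \pi_P$ for a generic $g \colon P \to \Real^2$. This is in general impossible, and the paper says so explicitly in the remark immediately following the theorem: the Stein factorization of the map produced in \cite{IK17} ``is possibly not homeomorphic to the given branched polyhedron $P$ in general''; only a canonical embedding of $\Nbd(S(P);P)$ into $\Nbd(S(Q_f);Q_f)$ is guaranteed. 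Here is why your stronger claim fails. For $g \circ \pi_P$ to be stable, $g$ must restrict to an immersion on every region: the fibers of $\pi_P$ over region points are circles that $f$ collapses, so at any critical point of $g|_R$ the differential $df = dg \circ d\pi_P$ has rank at most one along the whole circle fiber, and a fold arc of $g|_R$ would produce a two-dimensional set of singular points, contradicting that $S(f)$ is a link. But the germ of $g$ along $\Nbd(S(P);P)$ is already dictated by the local models at edges and vertices together with the branching, and extending this prescribed boundary behavior to an immersion of an entire region meets classical obstructions (rotation numbers of the boundary curves constrained by the Euler characteristic, as in immersion theory for surfaces in the plane); an internal region that is a closed surface admits no immersion into $\Real^2$ at all. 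The construction of \cite{IK17} circumvents this by fixing the map only over $\pi_P^{-1}(\Nbd(S(P);P))$ and extending over the $S^1$-bundle part at the cost of inserting additional definite fold circles over the regions. This changes $Q_f$ away from $P$ in the regions, but creates no new vertices; since, as you correctly observe, every vertex of a branched polyhedron carries the type $\mathrm{II}^2$ local model and never the type $\mathrm{II}^3$ one, the counts $|\mathrm{II}^{2}(f)| = |V(P)|$ and $\mathrm{II}^{3}(f) = \emptyset$ survive. So your conclusion is salvageable, but only under this weaker matching near $S(P)$, not under the exact Stein realization your argument pivots on.

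A secondary point: the gleams are not the crux you make them. Since $f$ collapses every fiber of $\pi_P$ over a region point, twisting the circle bundle does not alter the local normal forms of the composite map; the gleams enter only in determining the $4$-manifold $W$, and hence the pair $(M,L)$ on which $\pi_P$ is defined, and this is already encoded in the shadow data before any map is constructed. The genuinely delicate steps are the extension over the regions just described and the verification of the global stability conditions $(5)$--$(6)$, which is where \cite{IK17} spends its effort, and which your proposal cites rather than supplies.
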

The proof of Theorem~\ref{thm:bsc is equal to smc} given in \cite{IK17} is constructive. 
We remark that the Stein factorization $Q_f$ of the stable map $f$ in Theorem~\ref{thm:bsc is equal to smc} 
is possibly not homeomorphic to the given branched polyhedron $P$ in general. 
However, if we regard $Q_f$ as a branched polyhedron 
(this is possible because $\mathrm{II}^{3}(f) = \emptyset$), 
there exists a canonical embedding of $\Nbd (S(P) ; P)$ into 
 $\Nbd (S(Q_f) ; Q_f)$. 
Further, if $P$ contains $X$, the Stein factorization $Q_f$ also contains $X$ at the corresponding part.  

\begin{example}
\label{ex:stable maps of links}

Let  $P_{D_{H}}^*$ is a branched shadow of the Hopf link $H$ 
shown in Figure \ref{figure:example_PDHstar} in Example \ref{ex:branched shadows of links}. 
Let $h : (S^3,H) \to \Real^2$ be a stable map constructed from $P_{D_{H}}^*$ by 
Theorem~\ref{thm:bsc is equal to smc}. 
Then we can identify the Stein factorization $Q_h$ with $P_{D_{H}}^*$. 
The fiber $q_{h}^{-1} (p)$ over each point $p \in P_{D_{H}}^*$ can be described as follows. 
Here, we recall that $q_{h}: S^{3} \to P_{D_{H}}^*$ is the 
quotient map. 
We use the same notation as in Example \ref{ex:branched shadows of links}. 
 
\begin{itemize}

\item 
If $p \in D'-S(P_{D_{H}}^*)$, then $q_{h}^{-1}(p) = \{p\} \times S^{1} \subset D' \times S^1 = V$. 
See Figure \ref{figure:hfiber_ryouiki}. 
\begin{figure}[htbp]
\centering\includegraphics[width=7cm]{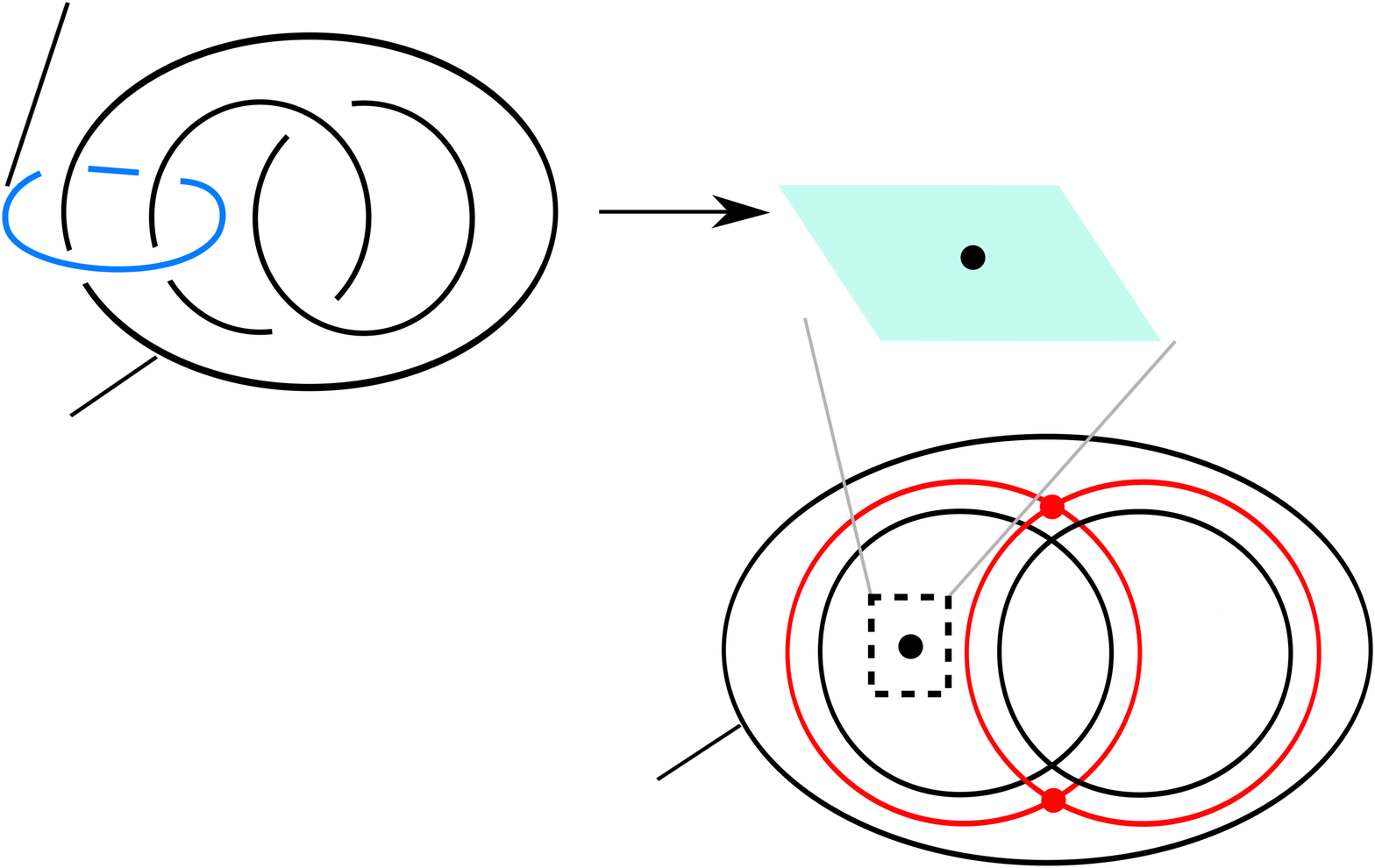}
\begin{picture}(400,0)(0,0)
\put(100,145){$q_{h}^{-1}(p)$}
\put(95,65){$q_{h}^{-1}(\partial D')$}
\put(245,100){$p$}
\put(190,115){$q_{h}$}
\put(243,42){$+1$}
\put(170,20){$\partial D'$}
\end{picture}
\caption{The fiber of $q_h$ over $p \in D'-S(P_{D_{H}}^{\ast})$.}
\label{figure:hfiber_ryouiki}
\end{figure}

\item 
Let $K$ be a component of $H$. 
Let $p \in K \times \{1\} \subset P_{D_{H}}^*$ and 
$p_{1} \in K \times (0,1) \subset P_{D_{H}}^*$. 
Then $q_{h}^{-1}(p)$ is a point on $K$, while $q_{h}^{-1}(p_{1})$ is a meridian of $K$. 
See Figure \ref{figure:hfiber_K}. 
\begin{figure}[htbp]
\centering\includegraphics[width=7cm]{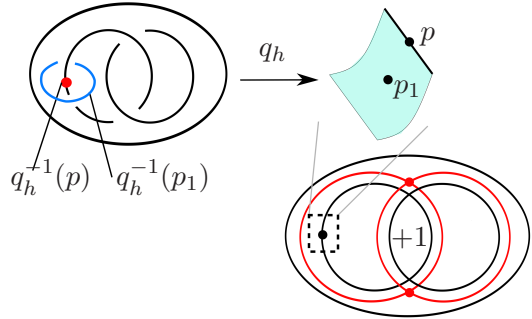}
\begin{picture}(400,0)(0,0)
\put(140,65){$q_{h}^{-1}(p_{1})$}
\put(100,65){$q_{h}^{-1}(p)$}
\put(194,114){$q_{h}$}
\put(245,100){$p_{1}$}
\put(244,42){$+1$}
\put(255,120){$p$}
\end{picture}
\caption{The fibers of $q_h$ over $p \in K \times \{1\} $ and 
$p_{1} \in K \times (0,1)$.}
\label{figure:hfiber_K}
\end{figure}

\item 
Let $p \in S(P_{D_{H}}^*)-V(P_{D_{H}}^*)$. 
Choose three points $p_1, p_2, p_3 \in P_{D_{H}}^*$ near $p$ as in Figure 
\ref{figure:hfiber_indefinite}. 
Then from the configuration of the fibers 
$q_{h}^{-1}(p_{1})$, $q_{h}^{-1}(p_{2})$, $q_{h}^{-1}(p_{3})$, we can find the 
configuration of $q_{h}^{-1}(p)$ as shown in Figure \ref{figure:hfiber_indefinite}.
\begin{figure}[htbp]
\centering\includegraphics[width=12cm]{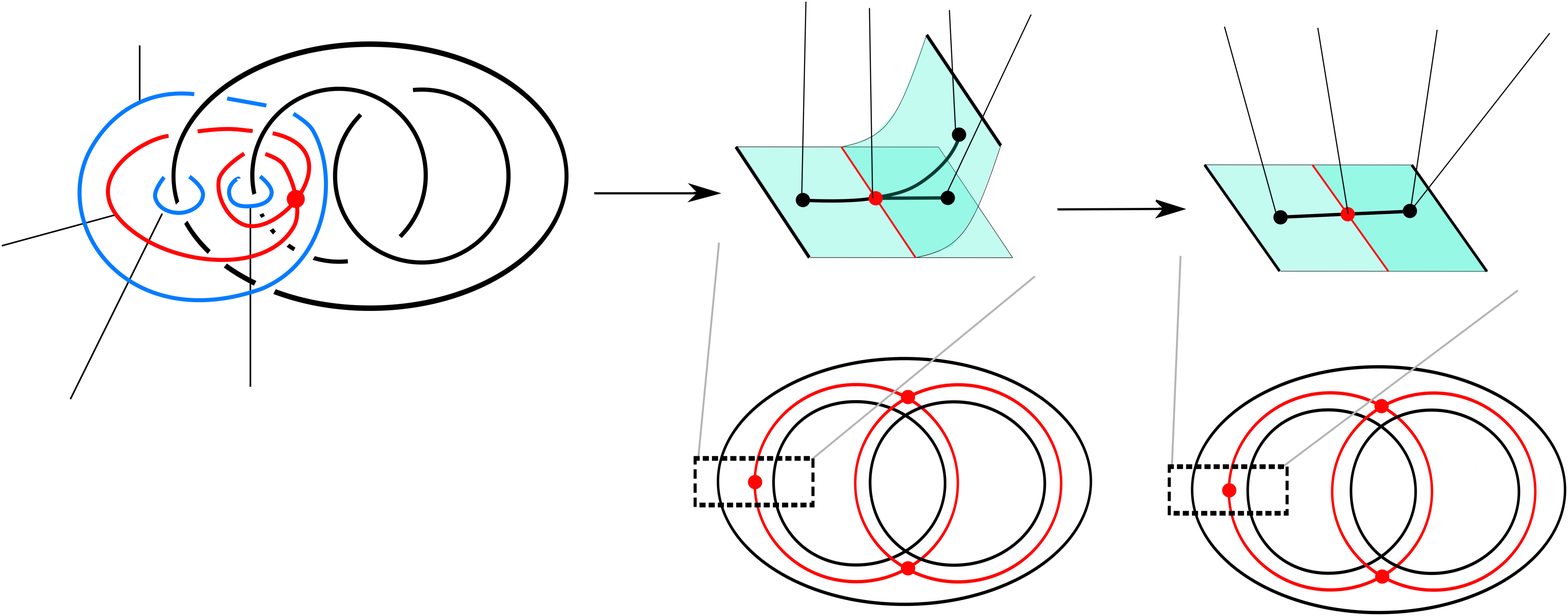}
\begin{picture}(400,0)(0,0)
\put(50,142){$q_{h}^{-1}(p_{3})$}
\put(5,85){$q_{h}^{-1}(p)$}
\put(30,53){$q_{h}^{-1}(p_{1})$}
\put(75,53){$q_{h}^{-1}(p_{2})$}
\put(192,149){$p_{1}$}
\put(208,149){$p$}
\put(223,149){$p_{2}$}
\put(242,149){$p_{3}$}

\put(155,112){$q_{h}$}
\put(250,110){$\bar{f}$}

\put(263,147){$\bar{h}(p_{1})$}
\put(293,147){$\bar{h}(p)$}
\put(321,147){$\bar{h}(p_{2})$}
\put(353,147){$\bar{h}(p_{3})$}

\put(211,41){$+1$}
\end{picture}
\caption{The fiber of $q_f$ over $p \in S(P_{D_{H}}^*)-V(P_{D_{H}}^*)$.}
\label{figure:hfiber_indefinite}
\end{figure}

\item 
Let $p \in V(P_{D_{H}}^*)$. 
Choose three points $p_1, p_2, p_3, p_4 \in P_{D_{H}}^*$ near $p$ as in Figure 
\ref{figure:hfiber_vertex}. 
Then from the configuration of the fibers 
$q_{h}^{-1}(p_{1})$, $q_{h}^{-1}(p_{2})$, $q_{h}^{-1}(p_{3})$, $q_{h}^{-1}(p_{4})$, we can find the 
configuration of $q_{h}^{-1}(p)$ as shown in Figure \ref{figure:hfiber_vertex}.
\begin{figure}[htbp]
\centering\includegraphics[width=12cm]{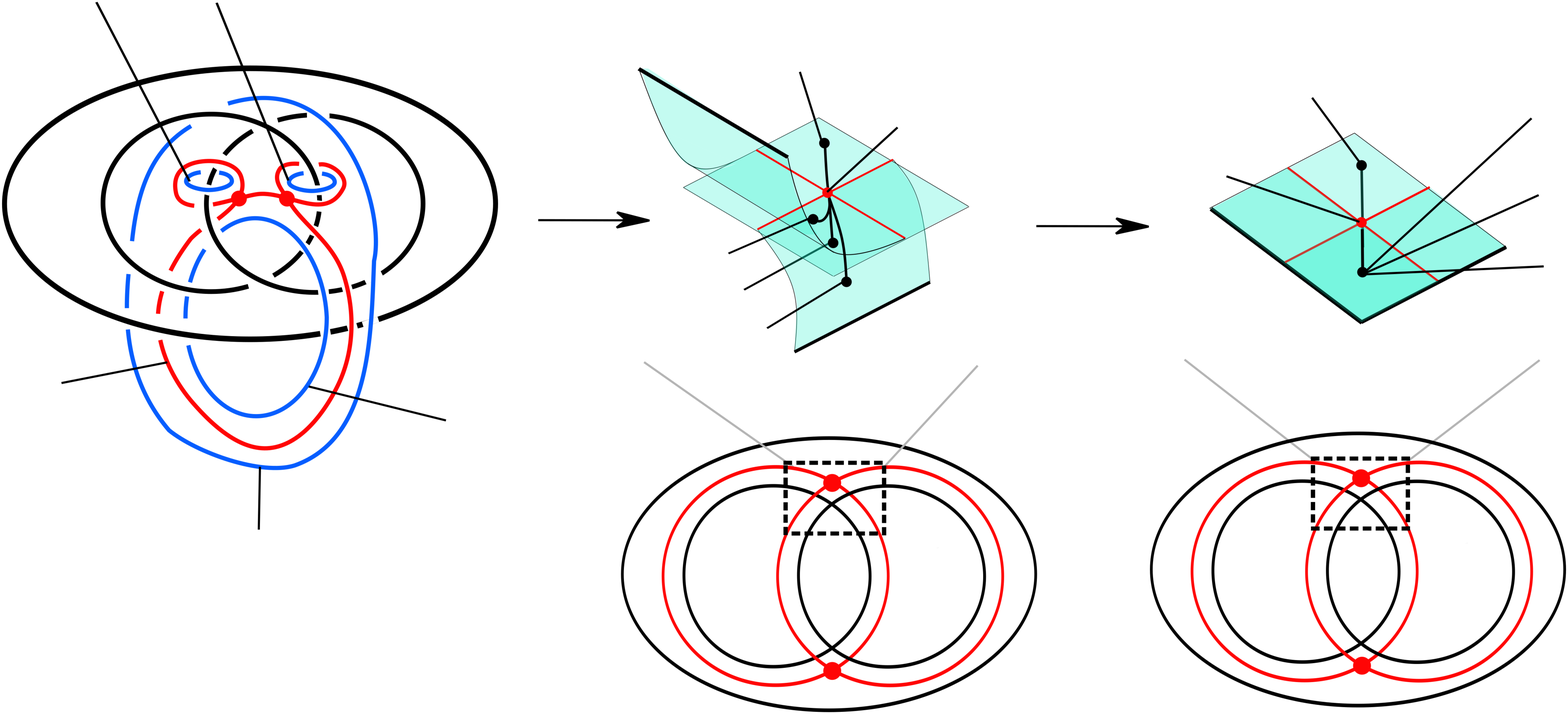}
\begin{picture}(400,0)(0,0)
\put(75,50){$q_{h}^{-1}(p_{4})$}
\put(35,170){$q_{h}^{-1}(p_{3})$}
\put(75,170){$q_{h}^{-1}(p_{1})$}
\put(20,85){$q_{h}^{-1}(p)$}
\put(135,75){$q_{h}^{-1}(p_{2})$}
\put(177,110){$p_{1}$}
\put(180,99){$p_{2}$}
\put(186,88){$p_{3}$}
\put(200,155){$p_{4}$}
\put(227,137){$p$}

\put(155,124){$q_{h}$}
\put(257,122){$\bar{h}$}

\put(270,130){$\bar{f}(p)$}
\put(290,150){$\bar{f}(p_{4})$}
\put(356,138){$\bar{f}(p_{1})$}
\put(358,122){$\bar{f}(p_{2})$}
\put(360,103){$\bar{f}(p_{3})$}

\put(204,42){$+1$}
\end{picture}

\caption{The fiber of $q_f$ over $p \in V(P_{D_{H}}^*)$.}
\label{figure:hfiber_vertex}
\end{figure}

\end{itemize}

\end{example}

\begin{corollary}
\label{cor:replacement stable maps}
Let $L$ be a link in a closed orientable $3$-manifold $M$. 
Let $P$ be a branched shadow of $E(L)$. 
Let $f: (M, L) \to \Real^2$ be a stable map obtained from $P$ as 
in Theorem~$\ref{thm:bsc is equal to smc}$. 
Then there exists a stable map $g: (M, L) \to \Real^2$ such that 
the Stein factorization $Q_g$ is obtained from $Q_f$ by replacing 
each part homeomorphic to $X$ with 
the model of Figure $\ref{figure:steindecomodel}$ {\rm (iv)}. 
\end{corollary}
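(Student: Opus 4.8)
The plan is to carry out the replacement of $X$ by a type $\mathrm{II}^3$ vertex one copy at a time, as a purely local modification of the stable map $f$ supported in the preimage of a neighborhood of $X$ in $Q_f$. First I would fix one copy of $X$ inside $Q_f$; such a copy exists by the remark following Theorem~\ref{thm:bsc is equal to smc}, which guarantees that $Q_f$ contains $X$ at the part corresponding to the $X$ in $P$. Set $N := q_f^{-1}(\Nbd(X; Q_f))$, a compact $3$-submanifold of $M$. Since $L$ lies in the definite fold locus, away from the indefinite structure constituting $X$, the submanifold $N$ can be taken disjoint from $L$. The restriction $f|_N : N \to \Real^2$ is then a local stable map whose Stein factorization is $\Nbd(X; Q_f)$, and outside $N$ I keep $f$ unchanged, so the problem reduces to replacing $f|_N$ by a local stable map carrying a single type $\mathrm{II}^3$ singular fiber and agreeing with $f$ on a collar of $\partial N$.

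Next I would introduce the standard local model. There is a local stable map $f_0 : N' \to \Real^2$ possessing exactly one type $\mathrm{II}^3$ singular fiber whose Stein factorization is the local model of Figure~\ref{figure:steindecomodel}~{\rm (iv)}; this is the model appearing in the classification of singular fibers of Kushner-Levine-Porto \cite{KLP84} and Levine \cite{Lev85}. By Theorem~\ref{thm:from a Stein factorization to a branched shadow}, the branched shadow obtained from $f_0$ by replacing the neighborhood of its type $\mathrm{II}^3$ vertex is exactly $X$, and the replacement of Figure~\ref{figure:II3X} does not alter the boundary. Since Turaev's reconstruction \cite{Tur92, Tur94} recovers the $3$-manifold together with the $S^1$-fibration over the complement of the singular set from the shadowed polyhedron, the preimage $3$-manifold $N'$ is diffeomorphic to $N$ by a diffeomorphism matching the fibration structures over $\partial \Nbd(X; Q_f)$ with those over the boundary of the $\mathrm{II}^3$ vertex neighborhood. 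In particular $\partial N \cong \partial N'$ compatibly with the two maps to $\Real^2$, up to a diffeomorphism of the target.

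With this identification in hand, I would cut out $f|_N$ and glue in $f_0$ along $\partial N \cong \partial N'$. The main technical point is the gluing itself: one has to interpolate the two maps to $\Real^2$ over a collar of the boundary, which consists of regular and fold points and hence carries an unchanged, product-like Stein factorization, so that the two maps agree smoothly there; one then performs a small generic perturbation to restore the global stability conditions, namely that the restriction of the resulting map to its singular set is an immersion with only normal crossings and that no spurious codimension-$2$ fibers are created outside the intended $\mathrm{II}^3$ fiber. Because $N$ is disjoint from $L$, the definite fold locus containing $L$ is untouched, so the resulting map $g$ is still a stable map of $(M, L)$.

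By construction $g = f$ outside $N$, while the Stein factorization of $g|_N$ is the model of Figure~\ref{figure:steindecomodel}~{\rm (iv)}, and the collar interpolation does not change the Stein factorization there; hence $Q_g$ is obtained from $Q_f$ by replacing the chosen copy of $X$ with that model. Iterating over the finitely many copies of $X$ in $Q_f$ yields the desired $g$. I expect the collar interpolation together with the generic perturbation preserving stability to be the main obstacle, since everything else follows from the Costantino-Thurston correspondence of Theorem~\ref{thm:from a Stein factorization to a branched shadow} and Turaev's reconstruction.
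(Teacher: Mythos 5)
Your overall architecture is the same as the paper's: excise $V = q_f^{-1}(X)$, which is a genus-three handlebody, glue in the local model of a type $\mathrm{II}^3$ singular fiber along $\partial V$, smooth, and iterate over the finitely many copies of $X$. The gap is in the one step that carries all the content, namely your assertion that ``Turaev's reconstruction recovers the $3$-manifold together with the $S^1$-fibration \dots hence $\partial N \cong \partial N'$ compatibly with the two maps to $\Real^2$.'' Turaev's theorem gives a diffeomorphism of the reconstructed manifolds; it does not hand you a specific identification $\partial N \to \partial N'$ that commutes with $q_f$ and $q_{f_0}$, nor does it guarantee that the fiber-preserving boundary identification forced by the two Stein factorizations extends over the handlebodies. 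If it did not extend, the cut-and-paste would produce a map on a manifold other than $M$, so this is exactly the point that must be checked. Note also that Theorem~\ref{thm:from a Stein factorization to a branched shadow} goes in the direction $\mathrm{II}^3$-vertex $\to X$; the corollary needs the inverse replacement, which is presumably why the paper does not simply cite Costantino--Thurston but instead verifies the boundary compatibility by hand: it records the fibers of $q_f$ over every boundary point of $X$ (Figure~\ref{figure:Xfiber}) and of $q_{f'}$ over the boundary of $\Nbd(v;Q_{f'})$ (Figure~\ref{figure:stein_II3}), and then exhibits an explicit handle slide of the genus-three handlebody (Figure~\ref{figure:handleslide}) realizing a diffeomorphism $q_f^{-1}(X) \to q_{f'}^{-1}(\Nbd(v;Q_{f'}))$ that matches the two fibered boundary structures, whence $f$ and $f'$ literally agree on $\partial V$ and can be glued.

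A symptom of the gap is that your argument never engages with the gleam $+1$ on the bigon of $X$: that half-integer datum is precisely what makes the fibered structure on $\partial q_f^{-1}(X)$ agree with that of the $\mathrm{II}^3$ model (a different gleam would give a combinatorially identical polyhedron whose boundary fibration differs by twisting, and the gluing would fail), yet it can only enter through a computation like the paper's handle slide. Relatedly, you identify the collar interpolation and the genericity perturbation as the main obstacle; those are routine (the paper dismisses them with ``smoothening appropriately''), while the fiberwise boundary matching you treat as automatic is the actual crux. The rest of your outline --- localization away from $L$, the count of copies of $X$, and the iteration --- is fine and agrees with the paper.
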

\begin{figure}[htbp]
\centering\includegraphics[width=8cm]{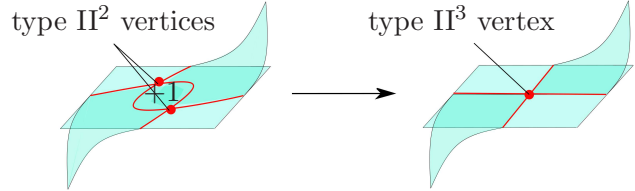}
\begin{picture}(400,0)(0,0)
\put(125,53){$+1$}
\put(75,80){type $\mathrm{II}^{2}$ vertices}
\put(210,80){type $\mathrm{II}^{3}$ vertex}
\end{picture}
\caption{Replacement of $X$ with a neighborhood of type $\mathrm{II}^{3}$ vertex.}
\end{figure}
\begin{proof}
Suppose that $Q_f$ contains a local part, which we simply denote by $X$, homeomorphic to $X$.  
As we have seen in Example \ref{ex:stable maps of links}, 
the fiber of $q_f$ over each point of the boundary of $X$ is as shown in Figure \ref{figure:Xfiber}. 
\begin{figure}[htbp]
\centering\includegraphics[width=12cm]{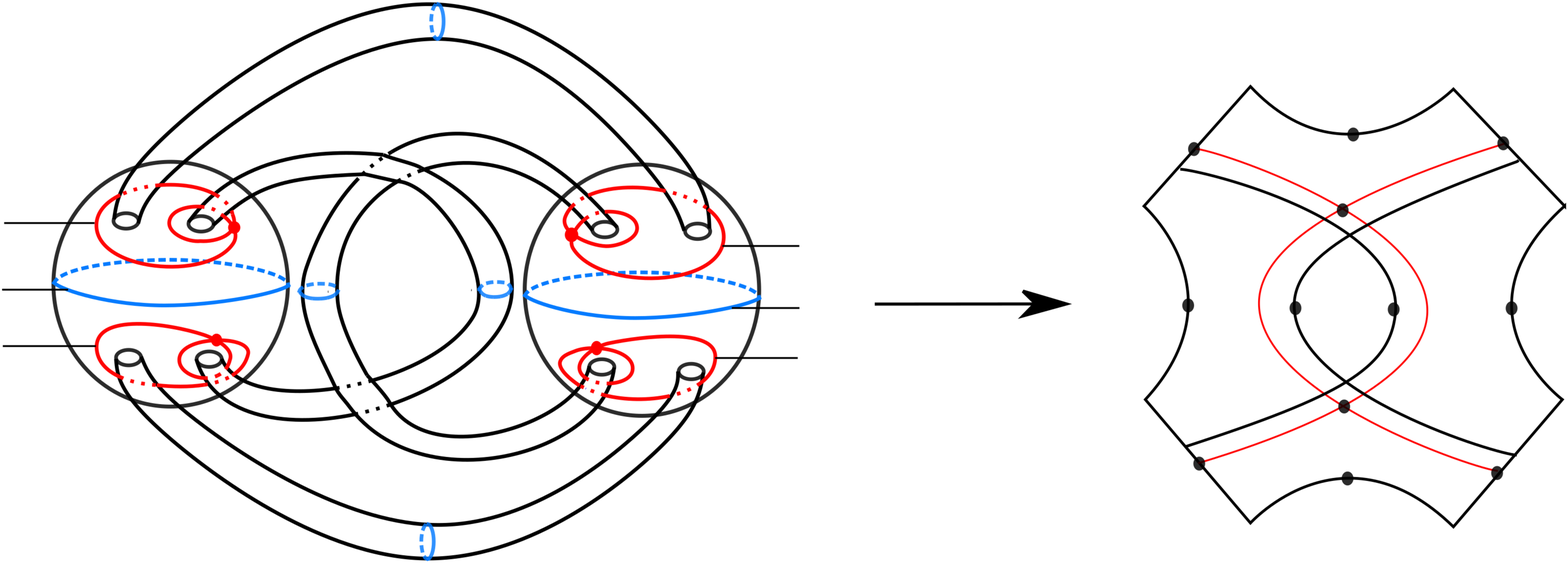}
\begin{picture}(400,0)(0,0)
\put(25,130){$S^{3}$}
\put(20,5){$q_f^{-1}(X)$ is the exterior of this handlebody.}
\put(308,5){$X$}

\put(199,90){$\textcolor{red}4$}
\put(199,78){$\textcolor{blue}5$}
\put(199,67){$\textcolor{red}2$}
\put(119,126){$\textcolor{blue}1$}
\put(117,38){$\textcolor{blue}4$}
\put(103,80){$\textcolor{blue}2$}
\put(121,80){$\textcolor{blue}3$}
\put(20,95){$\textcolor{red}1$}
\put(20,80){$\textcolor{blue}6$}
\put(20,67){$\textcolor{red}3$}

\put(226,90){$q_{f}$}

\put(350,118){$\textcolor{red}4$}
\put(351,78){$\textcolor{blue}5$}
\put(350,40){$\textcolor{red}2$}

\put(311,121){$\textcolor{blue}1$}
\put(295,78){$\textcolor{blue}2$}
\put(324,78){$\textcolor{blue}3$}
\put(310,35){$\textcolor{blue}4$}

\put(272,118){$\textcolor{red}1$}
\put(270,78){$\textcolor{blue}6$}
\put(272,40){$\textcolor{red}3$}



\put(305,77){$+1$}
\end{picture}
\caption{The fiber of $q_f$ over each point of the boundary of $X$.}
\label{figure:Xfiber}
\end{figure}

Let $f': M \to \Real^2$ be a stable map containing a singular fiber of type $\mathrm{II}^3$. 
Let $v$ be a type $\mathrm{II}^{3}$ vertex of the Stein factorization $Q_{f'}$. 
Then the behavior of the map $f'$ on the preimage of $\Nbd (v; Q_{f'})$ under the quotient map $q_{f'}$ 
is as drawn in Figure \ref{figure:stein_II3}. 
\begin{figure}[htbp]
\centering\includegraphics[width=13cm]{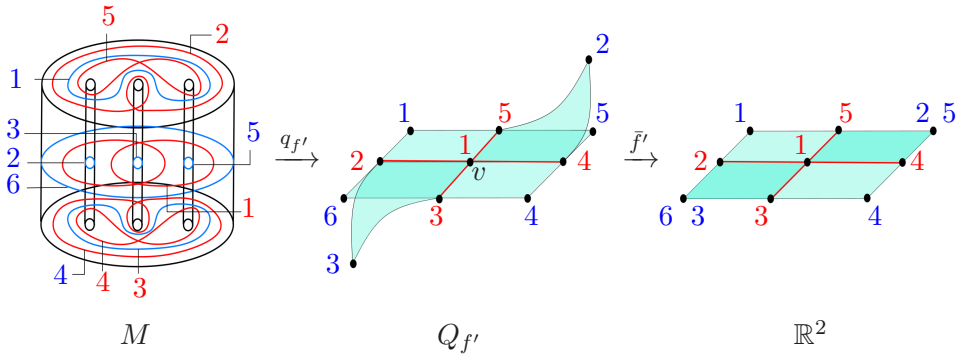}
\begin{picture}(400,0)(0,0)
\put(60,10){$M$}
\put(180,10){$Q_{f'}$}
\put(315,10){$\Real^{2}$}

\put(118,77){$\xrightarrow{q_{f'}}$}
\put(250,77){$\xrightarrow{\bar{f'}}$}

\put(193,72){$v$}

\put(53,131){$\textcolor{red}5$}
\put(96,124){$\textcolor{red}2$}

\put(18,107){$\textcolor{blue}1$}
\put(17,88){$\textcolor{blue}3$}
\put(17,77){$\textcolor{blue}2$}
\put(17,67){$\textcolor{blue}6$}

\put(35,33){$\textcolor{blue}4$}
\put(51,30){$\textcolor{red}4$}

\put(65,28){$\textcolor{red}3$}

\put(106,58){$\textcolor{red}1$}

\put(108,87){$\textcolor{blue}5$}

\put(240,120){$\textcolor{blue}2$}

\put(165,94){$\textcolor{blue}1$}
\put(203,94){$\textcolor{red}5$}
\put(240,94){$\textcolor{blue}5$}

\put(147,76){$\textcolor{red}2$} 
\put(188,82){$\textcolor{red}1$}
\put(233,76){$\textcolor{red}4$}

\put(137,55){$\textcolor{blue}6$}
\put(177,55){$\textcolor{red}3$}
\put(213,55){$\textcolor{blue}4$}
\put(138,37){$\textcolor{blue}3$}

\put(292,94){$\textcolor{blue}1$}
\put(332,94){$\textcolor{red}5$}
\put(361,94){$\textcolor{blue}2$}
\put(371,94){$\textcolor{blue}5$}

\put(277,76){$\textcolor{red}2$}
\put(315,82){$\textcolor{red}1$}
\put(359,76){$\textcolor{red}4$}

\put(264,55){$\textcolor{blue}6$}
\put(276,55){$\textcolor{blue}3$}
\put(300,55){$\textcolor{red}3$}
\put(343,55){$\textcolor{blue}4$}
\end{picture}
\caption{The Stein factorization of $f'|_{q_{f'}^{-1} (\Nbd (v; Q_{f'}))}$.}
\label{figure:stein_II3}
\end{figure}

We can assume that $\bar{f} ( X ) = \bar{f'} (\Nbd (v;Q_{f'}))$.  
We identify the boundary of $X$ and that of $\Nbd (v;Q_{f'})$ naturally so that  
 the map $\bar{f}$ coincides with $\bar{f'}$ on the boundaries modulo the identification. 
We note that both $q_{f}^{-1}(X)$ and $q_{f'}^{-1}(\Nbd (v;Q_{f'}))$ are handlebodies of genus three. 
We define a diffeomorphism from $q_{f}^{-1}(X)$ to $q_{f'}^{-1}(\Nbd (v;Q_{f'}))$  
using the handle slide shown on the bottom in Figure \ref{figure:handleslide}. 
We then identify the two handlebodies by this diffeomorphism and denote both of them by $V$. 
Under this identification, the fiber of $q_f$ over each point of the boundary of $X$ 
is identified with that of $q_{f'}$ over a point of the boundary of $\Nbd (v;Q_{f'})$ as indicated by the labels in  
Figure \ref{figure:handleslide}. 
\begin{figure}[htbp]
\centering
\includegraphics[width=13cm]{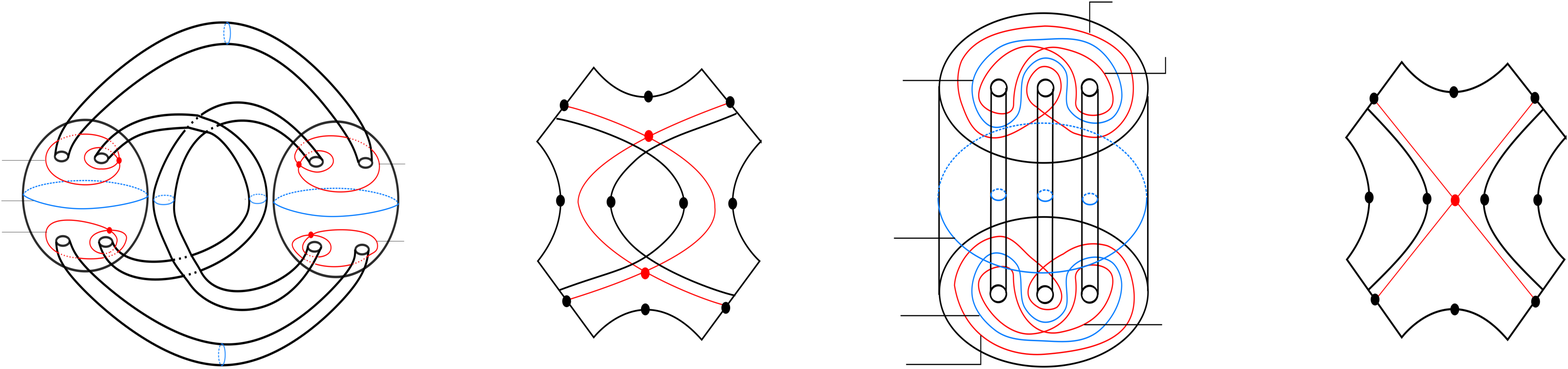}
\begin{picture}(400,0)(0,0)
\put(121,59){$\xrightarrow{q_{f}}$}
\put(302,59){$\xrightarrow{q_{f'}}$}
\put(161,57){$+1$}

\put(12,67){$\textcolor{red}1$}
\put(12,58){$\textcolor{blue}6$}
\put(12,49){$\textcolor{red}3$}
\put(67,88){$\textcolor{blue}1$}
\put(59,58){$\textcolor{blue}2$}
\put(68,58){$\textcolor{blue}3$}
\put(65,13){$\textcolor{blue}4$}
\put(112,67){$\textcolor{red}4$}
\put(112,58){$\textcolor{blue}5$}
\put(112,49){$\textcolor{red}2$}

\put(140,85){$\textcolor{red}1$}
\put(140,57){$\textcolor{blue}6$}
\put(140,32){$\textcolor{red}3$}
\put(165,90){$\textcolor{blue}1$}
\put(153,57){$\textcolor{blue}2$}
\put(177,57){$\textcolor{blue}3$}
\put(165,26){$\textcolor{blue}4$}
\put(189,85){$\textcolor{red}4$}
\put(190,57){$\textcolor{blue}5$}
\put(189,32){$\textcolor{red}2$}

\put(220,85){$\textcolor{blue}1$}
\put(220,49){$\textcolor{blue}6$}
\put(220,31){$\textcolor{blue}4$}
\put(220,19){$\textcolor{red}3$}
\put(242,58){$\textcolor{blue}2$}
\put(253,58){$\textcolor{blue}3$}
\put(263,58){$\textcolor{blue}5$}
\put(276,104){$\textcolor{red}1$}
\put(286,95){$\textcolor{red}4$}
\put(290,29){$\textcolor{red}2$}

\put(328,85){$\textcolor{red}1$}
\put(328,57){$\textcolor{blue}6$}
\put(328,32){$\textcolor{red}3$}
\put(353,90){$\textcolor{blue}1$}
\put(343,57){$\textcolor{blue}3$}
\put(365,57){$\textcolor{blue}2$}
\put(353,26){$\textcolor{blue}4$}
\put(377,85){$\textcolor{red}4$}
\put(378,57){$\textcolor{blue}5$}
\put(377,32){$\textcolor{red}2$}
\end{picture}

\includegraphics[width=13cm]{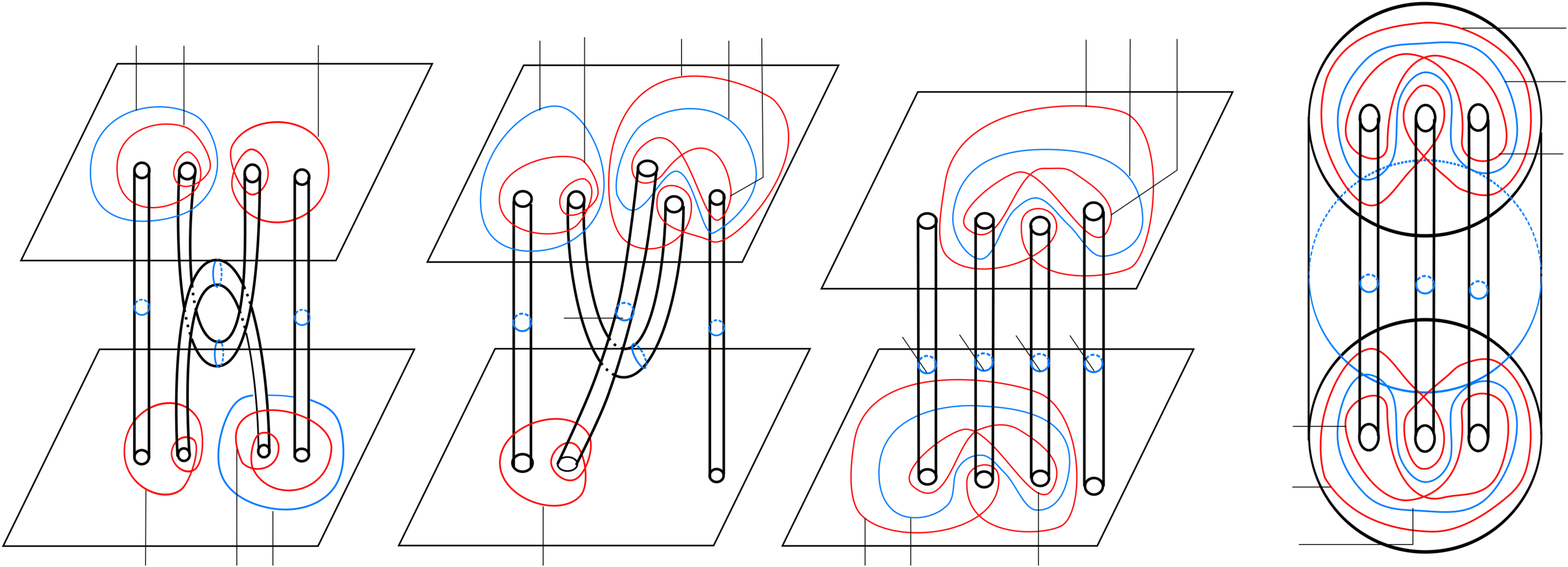}
\begin{picture}(400,0)(0,0)

\put(105,67){$\rightarrow$}
\put(195,67){$\rightarrow$}
\put(286,67){$\rightarrow$}

\put(50,130){$\textcolor{blue}6$}
\put(60,130){$\textcolor{red}3$}
\put(90,130){$\textcolor{red}1$}
\put(45,65){$\textcolor{blue}4$}
\put(67,83){$\textcolor{blue}2$}
\put(67,47){$\textcolor{blue}3$}
\put(90,65){$\textcolor{blue}1$}
\put(50,5){$\textcolor{red}2$}
\put(70,5){$\textcolor{red}4$}
\put(80,5){$\textcolor{blue}5$}

\put(138,131){$\textcolor{blue}6$}
\put(149,131){$\textcolor{red}3$}
\put(170,131){$\textcolor{red}1$}
\put(180,131){$\textcolor{blue}1$}
\put(189,131){$\textcolor{red}4$}
\put(128,64){$\textcolor{blue}4$}
\put(141,64){$\textcolor{blue}2$}
\put(165,47){$\textcolor{blue}3$}
\put(184,65){$\textcolor{blue}5$}
\put(140,5){$\textcolor{red}2$}

\put(260,131){$\textcolor{red}1$}
\put(270,131){$\textcolor{blue}1$}
\put(280,131){$\textcolor{red}4$}
\put(217,63){$\textcolor{blue}6$}
\put(230,63){$\textcolor{blue}2$}
\put(243,63){$\textcolor{blue}3$}
\put(255,63){$\textcolor{blue}5$}
\put(210,5){$\textcolor{red}2$}
\put(220,5){$\textcolor{blue}4$}
\put(250,5){$\textcolor{red}3$}

\put(370,130){$\textcolor{red}1$}
\put(370,115){$\textcolor{blue}1$}
\put(370,100){$\textcolor{red}4$}
\put(305,72){$\textcolor{blue}6$}
\put(316,72){$\textcolor{blue}2$}
\put(329,72){$\textcolor{blue}3$}
\put(354,72){$\textcolor{blue}5$}
\put(300,40){$\textcolor{red}2$}
\put(300,25){$\textcolor{red}3$}
\put(300,10){$\textcolor{blue}4$}

\end{picture}

\caption{The fibers of $q_f$ over the points of the boundary of $X$ (top left) agree with 
those of $q_{f'}$ over the points of the boundary of $\Nbd (v;Q_{f'})$ (top right) 
after a handle slide (bottom).}
\label{figure:handleslide}
\end{figure}
This implies that  $q_{f}|_{\partial V}$ coincides with $q_{f'}|_{\partial V}$.  
Consequently, $f$ coincides with $f'$ on the boundary of $V$. 
Thus, by gluing the maps 
$f|_{M- \Int V}$ and $f' |_V$ and then 
smoothening it appropriately, 
we obtain a stable map $g: (M,L) \to \Real^2$ satisfying 
$|\mathrm{II}^{2}(g)|=|\mathrm{II}^{2}(f)| - 2$ and $|\mathrm{II}^{3}(g)|=1$. 
Applying the same argument for the other local parts of $Q_f$  homeomorphic to $X$ repeatedly, 
we finally get a desired stable map of $(M, L)$ into $\Real^2$. 
\end{proof}

\section{The main theorem}
\label{sec:The main theorem}


We first show the "only if'' part of Theorem~\ref{thm:main theorem}. 
Suppose that there exists a stable map $f:(S^3,L) \to \Real^2$ with 
$\mathrm{II}^{2} = \emptyset$ and  $|\mathrm{II}^{3}| = 1$. 
Let $P$ be a branched polyhedron obtained by replacing a regular  
neighborhood of each vertex of type $\mathrm{II}^3$ of $Q_f$ with 
$X$. 
By Theorem~\ref{thm:from a Stein factorization to a branched shadow}, 
$P$ is a branched shadow of $(S^3, L)$. 
We note that, by this construction, the two vertices of $P$ is contained in 
a single component of the singular set $S(P)$. 
Let $c$ be the component of $S(P)$ containing the vertices of $P$. 
Set $P'=\Nbd(c;P) \cup X$, which is also a branched polyhedron, where 
every region is a half-open annulus. 
By Ishikawa-Koda \cite[Lemma~5.5]{IK17}, 
there exists a branched shadow $P''$ of $(S^3, L)$ 
that is obtained from $P'$ by attaching {\it towers} to some of its boundary components.
Here, a tower is a simple polyhedron obtained by gluing several $2$-disks to an annulus as depicted in 
Figure \ref{figure:tower}. 
\begin{figure}[htbp]
\centering\includegraphics[width=2cm]{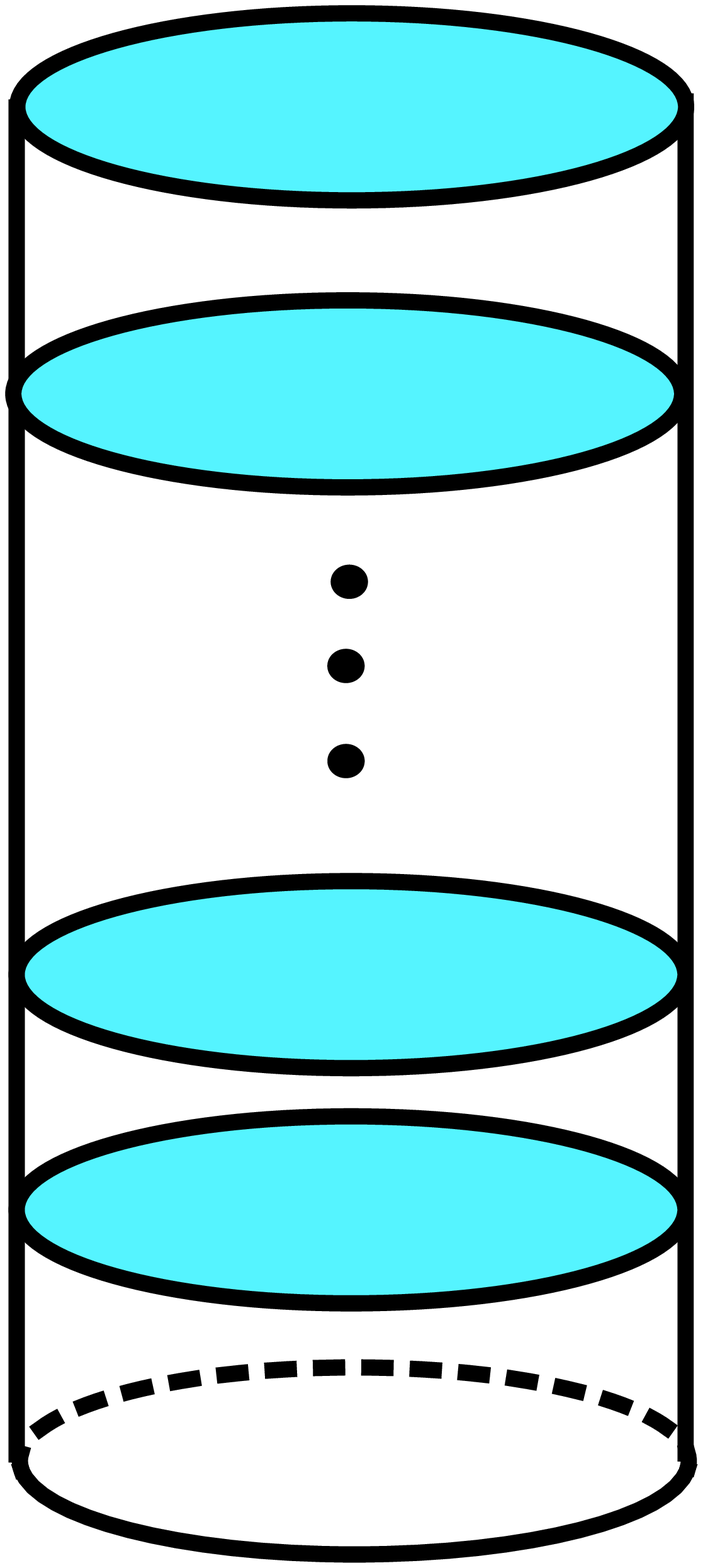}
\caption{A tower.}
\label{figure:tower}
\end{figure}
The branching of $P'$ naturally extends to that of $P''$. 

\begin{claim}
\label{claim:classification of P prime}
$P'$ is homeomporphic to one of the five branched polyhedra $P_1, P_2, \ldots, P_5$ depicted in 
Figure $\ref{figure:list_SP}$. 
\end{claim}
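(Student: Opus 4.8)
The plan is to reduce the statement to a finite combinatorial enumeration of the branched structure of $P'$ in a neighborhood of the singular circle $c$, exploiting the rigidity imposed by three constraints: the embedded piece $X$ with its two vertices and its $+1$-gleam bigon, the branching, and the requirement that every region of $P'$ be a half-open annulus. Since $P'$ has exactly two vertices and a very restricted region type, these constraints should cut the possibilities down to a short explicit list.

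First I would pin down the skeleton $S(P')$. By construction $c$ is a circle carrying both vertices $v_1$ and $v_2$, and $P'=\Nbd(c;P)\cup X$ is a regular neighborhood of $c$ together with $X$, so $S(P')$ consists of $c$ itself together with the edge-germs transverse to $c$ issuing from $v_1$ and $v_2$. Recalling that a regular neighborhood of a vertex of a simple polyhedron is the cone over the $1$-skeleton of a tetrahedron, exactly four triple lines meet at each $v_i$; two of them run along $c$ and bound the bigon of $X$, while the remaining two are transverse germs truncated by $\partial P'$. This determines $S(P')$ and the local pattern of sheets at each vertex up to the choice of branching.

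Second I would bring in the branching, the gleam, and the annulus condition, and then enumerate. A branching orients every region so that along each triple line the three induced orientations do not all agree; at a single vertex (the model of Figure~\ref{figure:simplepoly}~(iii)) this leaves only a short list of admissible local types, and the orientation of the bigon together with its fixed gleam $+1$ further constrains how the two $c$-edges are co-oriented at $v_1$ and at $v_2$. The branchings chosen at the two vertices must moreover be compatible along the two edges of $c$ joining them, coupling the two local choices. Since every region is a half-open annulus, each region meets $\partial P'$ in a single circle and limits at its open end onto a loop of $S(P')$; as $c$ is essentially the only available cycle, reading how the non-bigon sheets attach as one traverses $c$ through $v_1$ and $v_2$ encodes $P'$ as a cyclic word recording, at each vertex, its branched type and the way its transverse sheets close up. I would then list the compatible pairs of vertex types together with these gluings, discard those violating the branching, the annulus condition, or the presence of $X$, and collapse homeomorphic outcomes; the survivors should be exactly the five branched polyhedra $P_1,\dots,P_5$ of Figure~\ref{figure:list_SP}.

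I expect the main obstacle to be the completeness and bookkeeping of this finite case analysis: correctly listing the admissible branched vertex types, matching the branchings of $v_1$ and $v_2$ across the two edges of $c$, and verifying in each surviving case that the sheets genuinely close up into half-open annuli rather than into disks or more complicated surfaces. The delicate points are guaranteeing that no admissible configuration is overlooked and that homeomorphic duplicates are correctly identified, so that precisely five pairwise non-homeomorphic types remain.
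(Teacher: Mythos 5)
There is a genuine gap, and it is not merely bookkeeping: your enumeration is driven entirely by \emph{local} constraints (the presence of $X$, the branching condition, and the annular regions), but these alone do not cut the list down to five. Following the paper's own enumeration, $c$ is a $4$-valent graph on the two vertices whose possible shapes fall into three types (Figure~\ref{figure:list_c}); type $3$ is killed because the branching of $X$ cannot extend across it, and the branching of $X$ then leaves \emph{eight} candidates (Figures~\ref{figure:Nbd_c_type1} and \ref{figure:Nbd_c_type2}), of which two pairs are homeomorphic --- so the purely local analysis leaves \emph{six} pairwise non-homeomorphic branched polyhedra, all of them honest branched polyhedra containing $X$ with all regions half-open annuli. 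The sixth candidate, 1-(iv) of Figure~\ref{figure:Nbd_c_type1}, is eliminated only by a \emph{global} argument that your proposal never invokes: since $P''$ (obtained from $P'$ by attaching towers) becomes, after recoloring, a shadow of $S^3$, it must be simply connected by \cite[Lemma~5.1]{IK17}, and 1-(iv) cannot be made simply connected no matter how many towers are attached. Any proof that never uses the ambient manifold being $S^3$ cannot prove the claim as stated, because it would have to certify a six-element list, not a five-element one.

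A secondary, structural inaccuracy feeds into this: you treat $c$ as a ``singular circle'' with two transverse edge-germs at each vertex ``truncated by $\partial P'$,'' and accordingly encode $P'$ as a cyclic word along $c$. In fact $c$ is the full connected component of $S(P)$ containing both vertices, so all four edges incident to each vertex belong to $c$, and $P' = \Nbd(c;P)\cup X$ contains these edges in their entirety; there are no truncated triple-line germs ending on $\partial P'$. The correct enumeration is therefore over the graph types of $c$ (how the two non-bigon edges join the two vertices, including as loops) together with the branchings compatible with $X$ --- exactly the case division the paper performs --- rather than over cyclic words on a circle. Your framework could be repaired on this point, but without the simple-connectivity input from the shadow of $S^3$ the reduction to the five polyhedra $P_1,\ldots,P_5$ of Figure~\ref{figure:list_SP} fails.
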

\begin{figure}[htbp]
\centering\includegraphics[width=13cm]{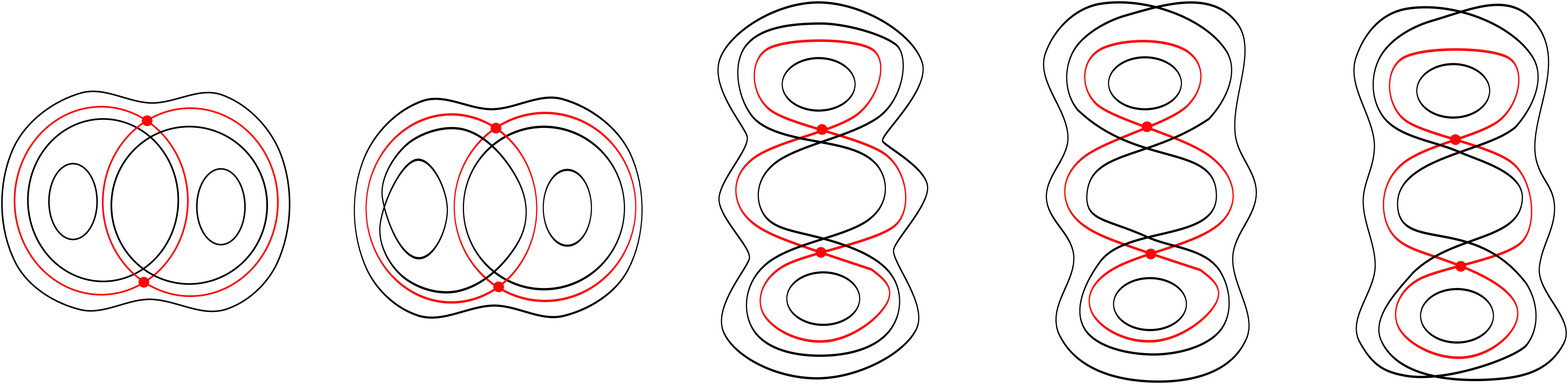}
\begin{picture}(400,0)(0,0)
\put(44,53){$+1$}
\put(42,0){$P_{1}$}
\put(126,52){$+1$}
\put(129,0){$P_{2}$}
\put(203,56){$+1$}
\put(204,0){$P_{3}$}
\put(282,55){$+1$}
\put(282,0){$P_{4}$}
\put(351,54){$+1$}
\put(354,0){$P_{5}$}

\end{picture}
\setlength\abovecaptionskip{10pt}
\caption{The branched polyhedra $P_1, P_2, \ldots, P_5$.}
\label{figure:list_SP}
\end{figure}
\begin{proof}[Proof of Claim~$\ref{claim:classification of P prime}$]
Since $c$ is a $4$-valent graph with exactly $2$ vertices, 
$c$ is homeomorphic to one of the graphs of types $1$, $2$ and $3$ shown in 
Figure \ref{figure:list_c}. 
\begin{figure}[htbp]
\centering\includegraphics[width=6.2cm]{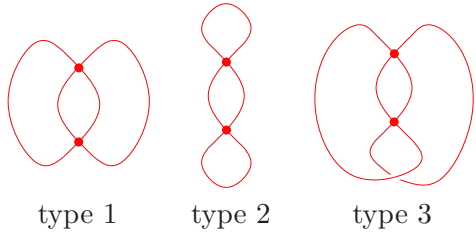}
\begin{picture}(400,0)(0,0)
\put(125,0){type $1$}
\put(183,0){type $2$}
\put(244,0){type $3$}
\end{picture}
\setlength\abovecaptionskip{10pt}
\caption{The $4$-valent graphs of types $1$, $2$ and $3$.}
\label{figure:list_c}
\end{figure}
Suppose that $c$ is of type $3$. 
Using the combinatorial structure of $c$, 
$P'$ is then obtained from $X$ by gluing subspaces of the boundary of $X$ 
according to the combinatorial structure around the edges of $P'$, 
see Figure \ref{figure:douitusi_c} for example. 
\begin{figure}[htbp]
\centering\includegraphics[width=4.5cm]{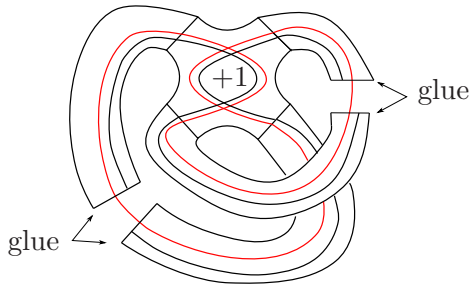}
\begin{picture}(400,0)(0,0)
\put(268,83){glue}
\put(113,26){glue}
\put(190,88){$+1$}
\end{picture}
\caption{From $X$ to $P'$.}
\label{figure:douitusi_c}
\end{figure}
However, as indicated in Figure \ref{figure:missmatch}, the branching of $X$ cannot be extended to 
that of $P'$ no matter how we glue subspaces of the boundary of $X$ in the above process. 
This contradicts the fact that the branched polyhedron $P'$ contains $X$. 
\begin{figure}[htbp]
\centering\includegraphics[width=9cm]{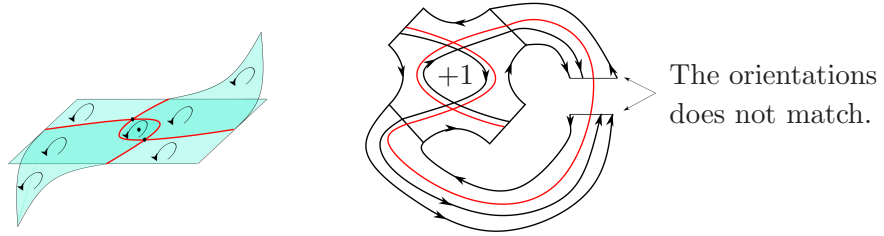}
\begin{picture}(400,0)(0,0)
\put(242,70){$+1$}
\put(330,70){The orientations}
\put(330,55){does not match.}
\end{picture}
\caption{If $c$ is of type $3$, the branching of $X$ cannot be extended to that of $P'$.}
\label{figure:missmatch}
\end{figure}
Therefore, $c$ must be of type $1$ or $2$.  
Then using the branching of $X$, we can check that $P'$ is one of the polyhedra depicted in 
Figures \ref{figure:Nbd_c_type1} and \ref{figure:Nbd_c_type2}. 
\begin{figure}[htbp]
\centering\includegraphics[width=13cm]{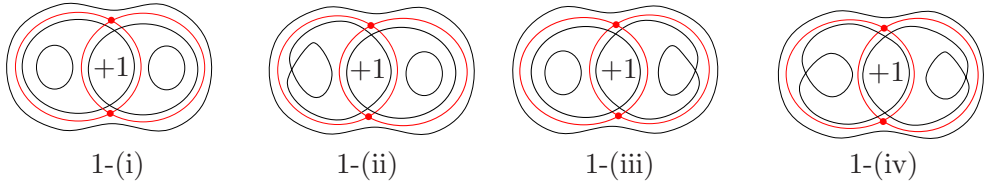}
\begin{picture}(400,0)(0,0)
\put(48,37){$+1$}
\put(145,36){$+1$}
\put(240,36){$+1$}
\put(341,35){$+1$}
\put(47,0){1-(i)}
\put(140,0){1-(ii)}
\put(234,0){1-(iii)}
\put(334,0){1-(iv)}
\end{picture}
\caption{The candidates of $P'$ when $c$ is of type $1$.}
\label{figure:Nbd_c_type1}
\end{figure}
\begin{figure}[htbp]
\centering\includegraphics[width=13cm]{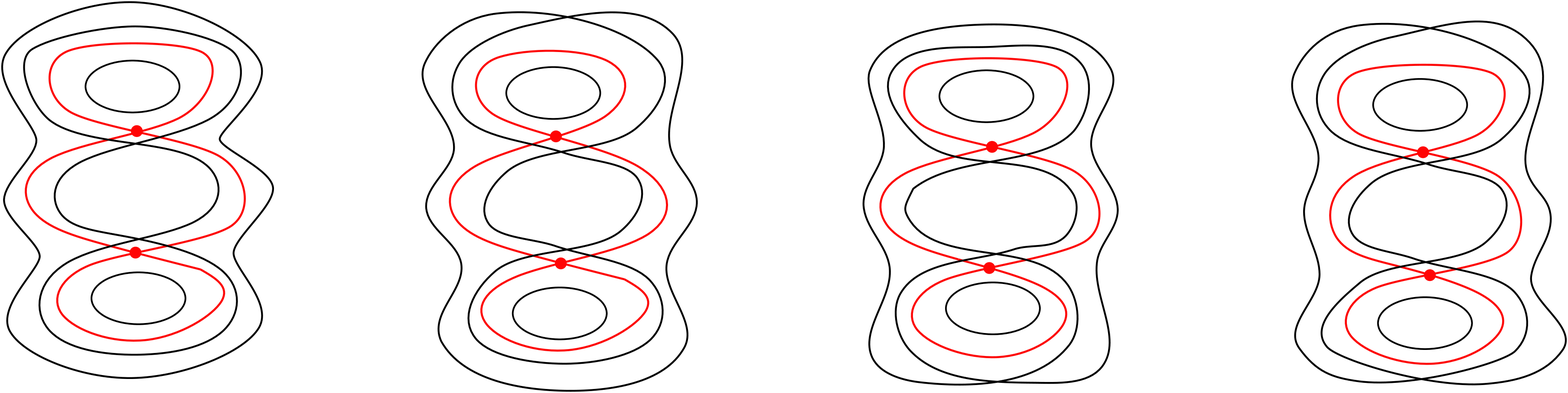}
\begin{picture}(400,0)(0,0)
\put(40,57){$+1$}
\put(140,55){$+1$}
\put(241,53){$+1$}
\put(343,52){$+1$}
\put(38,0){2-(i)}
\put(136,0){2-(ii)}
\put(236,0){2-(iii)}
\put(339,0){2-(iv)}
\end{picture}
\caption{The candidates of $P'$ when $c$ is of type $2$.}
\label{figure:Nbd_c_type2}
\end{figure}
We note that after replacing the color $i$ (internal) with $f$ (false) of the boundary-decoration 
of $P''$, $P''$ becomes a shadow of $S^3$. Thus, $P''$ is simply connected by 
Ishikawa-Koda \cite[Lemma~5.1]{IK17}. 
Therefore, $P'$ must be able to be simply connected by 
attaching towers to some of its boundary components. 
However, the branched polyhedron 1-(iv) in Figure \ref{figure:Nbd_c_type1} 
cannot be simply connected even though 
we attach a tower to every boundary component.
Hence, we can remove 1-(iv) from the 
possible list of shapes of $P'$. 
Further, the branched polyhedra 1-(ii) and 2-(ii) are homeomorphic to 
1-(iii) and 2-(iii), respectively. 
Consequently, $P'$ is homeomorphic to one of 1-(i), 1-(ii), 2-(i), 2-(ii) and 2-(iv). 
These are exactly the branched polyhedra $P_1, P_2, \ldots, P_5$ depicted in Figure \ref{figure:list_SP}. 
\end{proof}

\begin{claim}
\label{claim:P1, P2,...,P5 re branched shadows of L1, L2,...,L5}
The branched polyhedra $P_1, P_2, \ldots, P_5$ depicted in Figure $\ref{figure:list_SP}$ 
are, respectively, branched shadows of the exterior of the links 
$L_1'$, $L_2', \ldots, L_5'$,  where 
$L_1', L_2', L_3', L_4'$ are the links depicted in Figure $\ref{figure:Furutanilink}$, and 
$L'_5$ is the one depicted in Figure $\ref{figure:L_prime_5}$. 
\begin{figure}[htbp]
\centering\includegraphics[width=3.5cm]{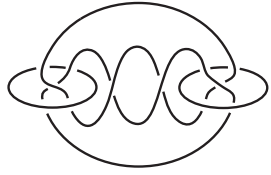}
\caption{The link $L'_5$.}
\label{figure:L_prime_5}
\end{figure}
\end{claim}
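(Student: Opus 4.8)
The plan is to identify each of the five branched polyhedra $P_1,\ldots,P_5$ with the shadow produced by the standard mapping-cylinder construction of Example~\ref{ex:branched shadows of links}, applied to a concrete diagram of each candidate link $L'_j$. Since Turaev's reconstruction recovers the pair $(M,L)$ (here the exterior) from a shadowed, boundary-decorated polyhedron in a canonical way, it suffices to exhibit, for each $P_j$, a link diagram whose associated branched shadow $P^*_{D_L}$ (or its reduced form $P_{D_L}$) is homeomorphic to $P_j$ as a shadowed, boundary-decorated polyhedron, and to check that the gleams match—every internal region must carry the gleam $+1$ recorded in Figure~\ref{figure:list_SP} via the crossing-contribution rule of Figure~\ref{figure:example_PDHstar}. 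Because all five $P_j$ consist of the fixed local piece $X$ with its single $+1$-gleamed bigon, together with half-open annuli attached along the edges of $c$, the combinatorial data to be matched is small.

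First I would fix, for each $j$, an explicit diagram of the proposed link $L'_j$ from Figure~\ref{figure:Furutanilink} (and $L'_5$ from Figure~\ref{figure:L_prime_5}), chosen so that it is a closed braid or otherwise admits the reduced shadow $P_{D_L}$ of Example~\ref{ex:branched shadows of links}. Then I would compute its branched shadow directly: read off the regions (the subsurfaces of the disk $D'$ together with the annuli $A_k = L_k \times (0,1]$), the singular set, and the two vertices arising at the relevant crossings, and verify that the resulting cell structure coincides with the one drawn for $P_j$. Next I would compute the gleam on each internal region by summing the local $\pm 1$ and $\pm\tfrac12$ contributions around crossings, confirming that the reduction from $P^*_{D_L}$ to $P_{D_L}$ (deleting the annular region $R$ touching $\partial D'$, as in Figure~\ref{figure:example_from_PDHstar_to_PHD}) leaves exactly the $+1$ gleam on the bigon of $X$. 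Having matched polyhedron, boundary-decoration, and gleams, Turaev's theorem gives that $P_j$ is a shadow of $E(L'_j)$, and the branching inherited from $D_L$ (available, e.g., from the closed-braid presentation) makes it a branched shadow.

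The structure is therefore a five-fold verification, each step being a diagram-by-diagram identification rather than an abstract argument. The main obstacle I anticipate is bookkeeping rather than conceptual: correctly orienting the annular regions and tracking which crossings of the chosen diagram produce the two vertices of $c$ so that the global combinatorics of $P_j$—in particular whether $c$ is of type $1$ or type $2$ and how the annuli are glued to its edges—agree with Figure~\ref{figure:list_SP}. Getting the gleam computation to yield precisely $+1$ (and not some other half-integer) after the reduction $P^*_{D_L}\to P_{D_L}$ is the delicate point, since an incorrect choice of diagram or framing would shift the gleam; I would guard against this by choosing the diagrams so that the gleam contributions are forced, and by cross-checking with the Hopf-link computation of Example~\ref{ex:branched shadows of links} as a sanity test. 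Once the five identifications are in place, no further argument is needed, as the claim is purely an assertion about which links these fixed polyhedra encode.
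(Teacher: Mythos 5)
There is a genuine gap, and it is structural rather than a matter of bookkeeping. In the mapping-cylinder construction of Example~\ref{ex:branched shadows of links}, the vertices of $P_{D_L}^*$ (and of its reduction $P_{D_L}$) are in bijection with the crossings of the diagram $D_L$. Each polyhedron $P_j$ of Figure~\ref{figure:list_SP} has exactly \emph{two} vertices, namely those carried by the local piece $X$. But $L'_1,\ldots,L'_5$ are five-component links --- $L'_1$ is the minimally twisted $5$-chain link --- so any diagram of $L'_j$ has far more than two crossings (already two crossings per clasped pair of consecutive components forces at least ten), and its associated shadow has correspondingly many vertices. Consequently no diagram of $L'_j$ can have $P^*_{D_L}$ or $P_{D_L}$ homeomorphic to $P_j$, and the five-fold diagram-by-diagram identification you propose cannot even begin. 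Matching gleams and branchings, the points you flag as delicate, cannot rescue a polyhedron with the wrong number of vertices.

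The paper's actual argument uses the Example construction, but applied to the two-crossing diagrams $U_1,\ldots,U_5$ of the \emph{unknot or the Hopf link} (Figure~\ref{figure:trivial2chain}), whose shadows have exactly the two required vertices; it then produces each $P_j$ from the corresponding shadow by deleting the interiors of two $2$-disks $D_1, D_2$ lying in internal regions. The mechanism you are missing is what this deletion does on the manifold side: puncturing a region creates a new externally colored boundary circle, and the preimage $\pi_{P^*_{U_1}}^{-1}(D_i)$ is a solid torus whose interior gets drilled out of the ambient manifold. Drawing these solid tori explicitly in $S^3$ (Figure~\ref{figure:shadow_L1}) identifies the union of the small link with the drilled-out cores as $L'_j$, so that $P_j$ is a branched shadow of $E(L'_j)$. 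In short: the two extra vertices come from a two-crossing diagram of a tiny link, and the remaining link components of $L'_j$ are created by drilling along preimages of punctured disks --- not by any diagram of $L'_j$ itself. To repair your proof you would need to abandon the direct identification and incorporate this puncture-and-drill step, including the explicit identification of the drilled cores in $S^3$ for each of the five cases.
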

\begin{proof}[Proof of Claim~$\ref{claim:P1, P2,...,P5 re branched shadows of L1, L2,...,L5}$]
Let $U_1, U_2 , \ldots, U_5$ be diagrams of the trivial knot or the Hopf link 
illustrated in Figure \ref{figure:trivial2chain}. 
\begin{figure}[htbp]
\centering\includegraphics[width=13cm]{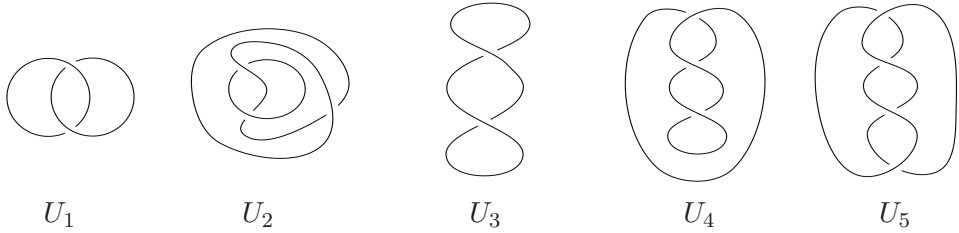}
\begin{picture}(400,0)(0,0)
\put(35,0){$U_{1}$}
\put(110,0){$U_{2}$}
\put(196,0){$U_{3}$}
\put(277,0){$U_{4}$}
\put(351,0){$U_{5}$}
\end{picture}
\caption{The diagrams $U_1, U_2, \ldots, U_5$ of the trivial knot or the Hopf link.}
\label{figure:trivial2chain}
\end{figure}
Let $P_{U_1}^*$, $P_{U_2}$, $P_{U_3}^*$, $P_{U_4}$, $P_{U_5}$ be the branched polyhedra obtained from 
$U_1, U_2 , \ldots, U_5$ as in Example \ref{ex:branched shadows of links}. 

We first show that $P_1$ is a branched shadow of $E(L'_1)$. 
Define a coloring of $\partial P^*_{U_1}$ by $\partial_e P^*_{U_{1}}=\partial P^*_{U_1}$. 
The boundary $\partial P^{\ast}_{U_{1}}$ of $P^{\ast}_{U_{1}}$ forms 
a  link in $S^{3}$, and  
$P^{\ast}_{U_{1}}$ is a branched shadow of the exterior $E(\partial P^{\ast}_{U_{1}})$ of 
the link $\partial P^{\ast}_{U_{1}} \subset S^{3}$. 
The branched polyhedron $P_{1}$ is obtained by removing the interior of the $2$-disks $D_1$ and $D_2$ 
shown in Figure \ref{figure:shadow_L1}. 
Recall the map $\pi_{P^*_{U_{1}}}$ defined in Section \ref{subsec:Branched shadows}. 
The preimages of the disks $D_1$ and $D_2$ under $\pi_{P^*_{U_{1}}}$ are then solid tori 
shown in Figure \ref{figure:shadow_L1}. 
\begin{figure}[htbp]
\centering\includegraphics[width=4cm]{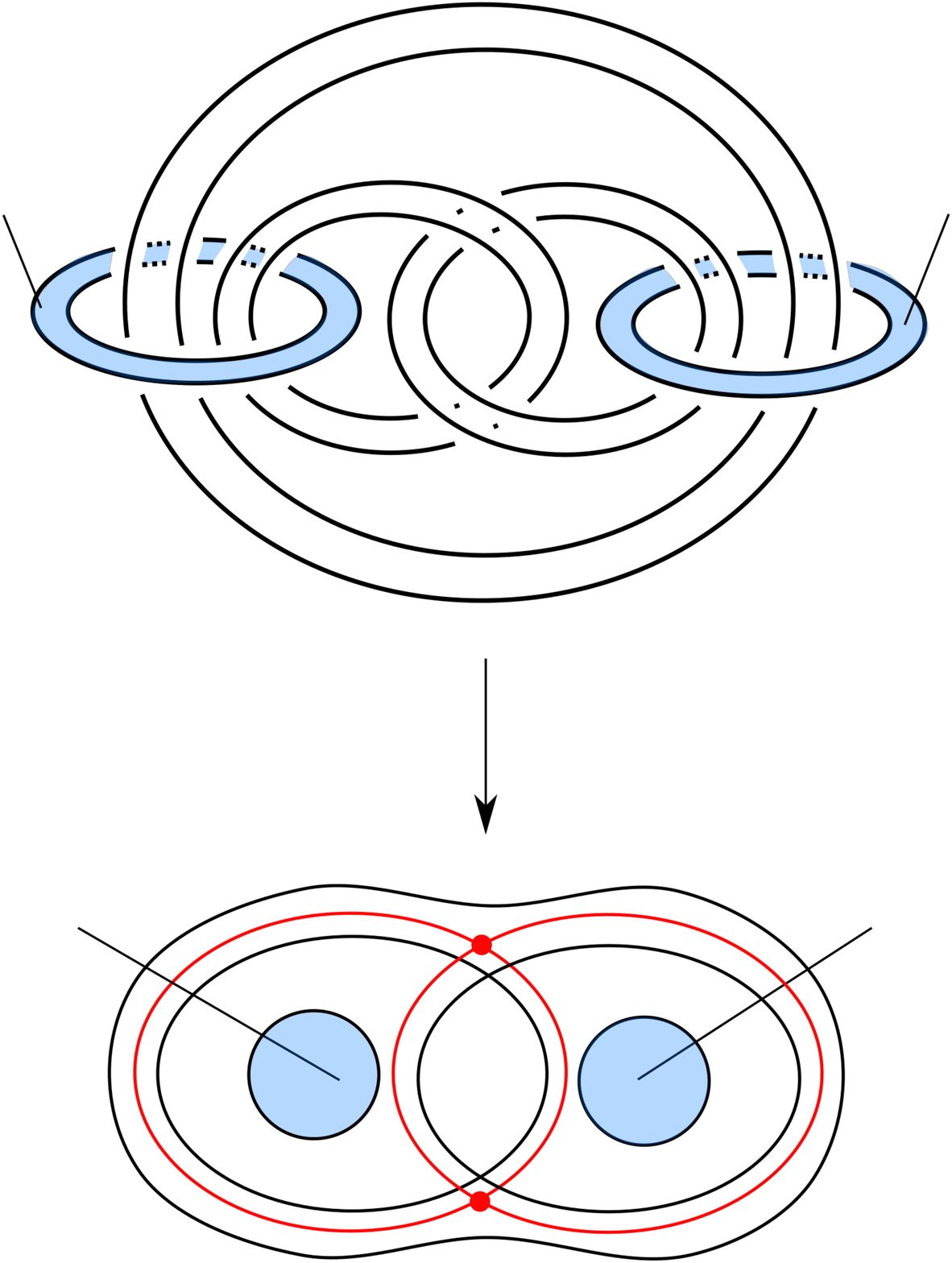}
\begin{picture}(400,0)(0,0)
\put(205,76){$\pi_{P^*_{U_{1}}}$}
\put(133,55){$D_{1}$}
\put(100,140){$\pi_{P^*_{U_{1}}}^{-1}(D_{1})$}
\put(250,55){$D_{2}$}
\put(260,140){$\pi_{P^*_{U_{1}}}^{-1}(D_{2})$}
\put(193,36){$+1$}
\end{picture}
\caption{The $2$-disks $D_{1}, D_{2}$ and their preimages under $\pi_{P^*_{U_{1}}}$.}
\label{figure:shadow_L1}
\end{figure}
As indicated in Figure \ref{figure:shadow_L1}, 
the space obtained from $E(\partial P^{\ast}_{U_{1}})$ 
by removing the interiors of the solid tori 
$\pi^{-1}(D_{1})$ and $\pi^{-1}(D_{2})$
is homeomorphic to $E(\partial P^*_{U_{1}})$. 
This implies that $P_{1}$ is a branched shadow of $E(L'_{1})$. 

The proof for the other cases runs in the same way. 
Indeed, the polyhedra $P_2$, $P_3$, $P_4$, $P_5$ can be obtained by removing the interiors of 
the $2$-disks from $P_{U_{2}}$, $P^*_{U_{3}}$, $P_{U_{4}}$, $P_{U_{5}}$ shown in Figure \ref{figure:P_i}. 
Since the the preimages of those disks are as drawn in the same figure, 
we see that $P_2$, $P_3$, $P_4$, $P_5$ are branched shadows of $E(L'_{2})$, $E(L'_{3})$, 
$E(L'_{4})$, $E(L'_{5})$, respectively. 
\begin{figure}[htbp]
\centering\includegraphics[width=13cm]{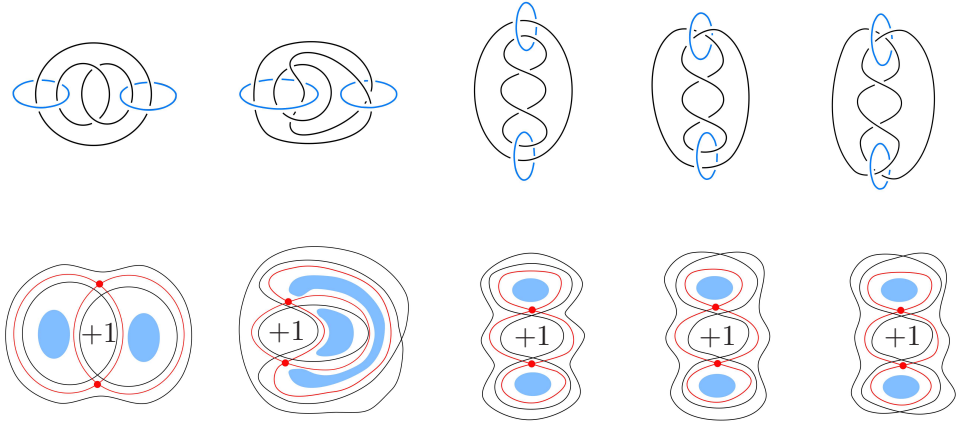}
\caption{The preimages of $2$-disks.}
\begin{picture}(400,0)(0,0)
\put(50,70){$+1$}
\put(121,71){$+1$}
\put(214,69){$+1$}
\put(284,69){$+1$}
\put(354,69){$+1$}
\end{picture}
\label{figure:P_i}
\end{figure}
\end{proof}
By Claim~\ref{claim:classification of P prime}, $P''$ is homeomorphic to 
one of $P_1, P_2, \ldots, P_5$ depicted in Figure \ref{figure:list_SP}, or 
the one obtained by attaching towers to some of their boundary components. 
Thus, by Claim~\ref{claim:P1, P2,...,P5 re branched shadows of L1, L2,...,L5}, 
together with Costantino-Thurston \cite[Proposition 3.27]{CT08}, 
$E(L)$ is diffeomorphic to
a $3$-manifold obtained by Dehn filling $E(L_1'), E(L_2'), \ldots, E(L_5')$  along some of 
$($possibly none of$)$ boundary tori. 
Since SnapPy \cite{CDGW} tells us that the complements of the 
links $L'_3$ and $L'_5$ are homeomorphic, we can exclude $E(L_5')$ from them.

Next, we show the ``if'' part of Theorem~\ref{thm:main theorem}. 
Suppose that $E(L)$ is diffeomorphic to
a $3$-manifold obtained by Dehn filling $E(L_1'), E(L_2'), E(L_3'), E(L_4')$  along some of 
$($possibly none of$)$ boundary tori. 
Let $P_i$ ($i = 1,2,3,4$) be the branched shadow of $E(L'_i)$ shown in Figure \ref{figure:list_SP}. 
Then again by Costantino-Thurston \cite[Proposition 3.27]{CT08}, we can obtain a branched shadow $P$ 
of $E(L)$ by attaching towers to some of (possibly, none of) the boundary components of $P_i$. 
By Theorem~\ref{thm:bsc is equal to smc} to $P$, we can construct a stable map 
$f: (S^3, L) \to \Real^2$ from $P$. 
Then by Corollary~\ref{cor:replacement stable maps}, 
there exists a stable map $g: (S^3, L) \to \Real^2$ such that 
the Stein factorization $Q_g$ is obtained from $Q_f$ by replacing 
each part homeomorphic to $X$ with 
the model of Figure $\ref{figure:steindecomodel}$ (iv). 
Clearly, we have $\mathrm{II}^{2}(g) = \emptyset$ and  $|\mathrm{II}^{3}(g)| = 1$. 
This completes the proof of Theorem~\ref{thm:main theorem}. 

\vspace{1em}

As we have seen in the above proof, 
for each link $L_i'$ ($i = 1, 2, 3, 4$), we can construct a stable map 
$f_i: (S^3, L_i') \to \Real^2$ with $\mathrm{II}^2 (f_i) = \emptyset$ and $|\mathrm{II}^3 (f_i)| = 1$ 
explicitly. 
In particular, in this case, as we have seen in 
Example \ref{ex:stable maps of links}, 
$Q_{f_i}$ is obtained from $P_i$ by replacing the unique local part homeomorphic to 
$X$ with the model of Figure $\ref{figure:steindecomodel}$ {\rm (iv)}, and further, we can draw  
the configuration of fibers of $f_i$ explicitly. 
Consequently, we have the following. 

\begin{corollary}
\label{cor:configuration of the unique singular fiber of type II3}
For each link $L_i'$ $($$i = 1, 2, 3, 4$$)$ shown in Figure $\ref{figure:Furutanilink}$, 
there exists a stable map $f_i: (S^3, L'_i) \to \Real^2$ with 
$\mathrm{II}^2 (f_i) = \emptyset$ and $|\mathrm{II}^3 (f_i)| = 1$
such that 
the configuration of the unique singular fiber of type $\mathrm{II}^3$ 
is as shown in Figure $\ref{figure:II3}$. 
\end{corollary}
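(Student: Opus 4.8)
The plan is to unwind the explicit, constructive nature of the proof of the ``if'' part of Theorem~\ref{thm:main theorem} and then to read off the local picture near the resulting type $\mathrm{II}^{3}$ vertex. Fix $i \in \{1,2,3,4\}$ and start from the branched shadow $P_i$ of $E(L_i')$ produced in Claim~\ref{claim:P1, P2,...,P5 re branched shadows of L1, L2,...,L5} and depicted in Figure~\ref{figure:list_SP}. Each $P_i$ has exactly two vertices, both lying on the single component of its singular set, and contains exactly one local part homeomorphic to $X$.

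First I would apply Theorem~\ref{thm:bsc is equal to smc} to $P_i$ to obtain a stable map $\tilde{f}_i : (S^3, L_i') \to \Real^2$ with $|\mathrm{II}^{2}(\tilde{f}_i)| = |V(P_i)| = 2$ and $\mathrm{II}^{3}(\tilde{f}_i) = \emptyset$. Because that construction is constructive and, as noted after the theorem, it carries the local part $X$ of $P_i$ to the corresponding part of the Stein factorization, every fiber of $q_{\tilde{f}_i}$ over a point of $P_i$ is determined explicitly, exactly as in Example~\ref{ex:stable maps of links}. Next I would apply Corollary~\ref{cor:replacement stable maps}, which replaces the local part homeomorphic to $X$ in $Q_{\tilde{f}_i}$ by the model of Figure~\ref{figure:steindecomodel}~(iv) and yields the desired stable map $f_i$ with $\mathrm{II}^{2}(f_i) = \emptyset$ and $|\mathrm{II}^{3}(f_i)| = 1$.

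The content specific to the corollary is that this whole procedure is explicit enough to draw the fiber over the new vertex. I would track the fibers through the handle slide of Figure~\ref{figure:handleslide}: the labeled fibers of $q_{\tilde{f}_i}$ over the boundary of $X$ are identified, via that handle slide, with the fibers over the boundary of a neighborhood of the type $\mathrm{II}^{3}$ vertex $v$ in the standard model of Figure~\ref{figure:stein_II3}. Reading off the singular fiber $q_{f_i}^{-1}(v)$, together with the adjacent regular, definite-fold and indefinite-fold fibers governed by that same local model, then gives precisely the configuration drawn in Figure~\ref{figure:II3}.

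The only real obstacle is bookkeeping: one must verify, for each of the four concrete polyhedra $P_1, \ldots, P_4$, that the identification of fibers across the handle slide in Corollary~\ref{cor:replacement stable maps} is carried out consistently, so that the labels match and the resulting singular fiber is connected with exactly the claimed shape. This is routine given the explicit local models, but it requires care to check that nothing global interferes with the purely local replacement near $X$.
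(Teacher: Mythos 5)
Your proposal is correct and follows essentially the same route as the paper: the authors likewise apply Theorem~\ref{thm:bsc is equal to smc} to the explicit branched shadow $P_i$, invoke Corollary~\ref{cor:replacement stable maps} to trade the unique local part $X$ for the type $\mathrm{II}^{3}$ vertex model, and read off the fiber configuration from the explicit fibers of Example~\ref{ex:stable maps of links} together with the handle-slide identification of Figure~\ref{figure:handleslide}. No gap to report.
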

\begin{figure}[htbp]
\centering\includegraphics[width=14cm]{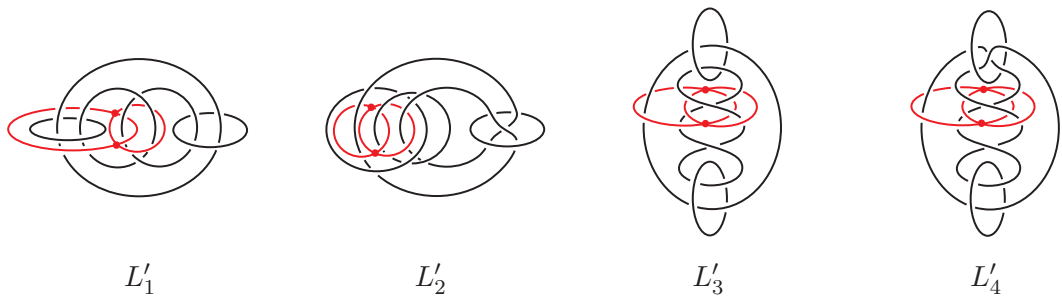}
\begin{picture}(400,0)(0,0)
\put(45,0){$L'_{1}$}
\put(155,0){$L'_{2}$}
\put(260,0){$L'_{3}$}
\put(365,0){$L'_{4}$}
\end{picture}
\caption{The configuration of the type $\mathrm{II}^3$ singular fibers.}
\label{figure:II3}
\end{figure}

\section*{Acknowledgments} 

\thanks{
The authors would like to thank the anonymous referee
for his or her valuable comments and suggestions 
that helped them to improve the exposition. 
Y. K. is supported by JSPS KAKENHI Grant Numbers JP17H06463, 
JP20K03588 and JST CREST Grant Number JPMJCR17J4.}

\appendix 
\section{Hyperbolic structures}
\label{appendix:Hyperbolic structures}

Let $P$ be a (branched) shadow of a $3$-manifold $M$, and let $c$ be a component of $S(P)$. 
Set $P' := \Nbd (c; P)$. 
It is easily seen that if $c$ contains no vertices of $P$, $\pi_P^{-1} (P')$ is a 
pair of pants bundle over $S^1$. 
In particular, $\pi_P^{-1} (P')$ is a Seifert fibered space in this case. 
Suppose that $c$ contains $n$ vertices of $P$, where $n > 0$. 
In this case, it was shown by Costantino-Thurston \cite[Proposition 3.33]{CT08} that $\Int \pi_P^{-1} (P')$ 
is a hyperbolic manifold of volume $2n v_{\mathrm{oct}}$. 
In fact, for each vertex in $c$, we can construct two ideal regular octahedra glued together along some 
of their faces to form a handlebody of genus three, and the remaining faces are glued together using the combinatorial structure along the edges of $P'$. 
See also Ishikawa-Koda \cite[Section 6]{IK17}. 

Let $X(P)$ be the set of local parts of $P$ homeomorphic to $X$  depicted in Figure \ref{figure:X} 
whose vertices are contained in $c$. 
Recall that a branched shadow obtained as in 
Theorem~\ref{thm:from a Stein factorization to a branched shadow} 
always contains this type of local part, each of which corresponds to 
a singular fiber of type $\mathrm{II}^3$. 
Suppose that $X(P) \neq \emptyset$, and $X(P) = \{ X_1, X_2, \ldots, X_m\}$. 
Set $P'' := P' \cup \bigcup_i X_i$. 
Then it was mentioned in Costantino-Thurston \cite[Theorem~3.38]{CT08} that 
$\Int \pi_P^{-1} (P'')$ 
admits a hyperbolic structure of volume $2 (n-2m) v_{\mathrm{oct}} + 10 m v_{\mathrm{ted}}$. 
In fact, we can show that for each vertex in $P'' - P'$, we can construct two ideal regular octahedra 
glued together as above, 
and for each $X_i$, we can construct ten ideal regular tetrahedra glued together along some 
of their faces to form a handlebody of genus three. 
Then the remaining faces, all of them are totally geodesic ideal regular triangles, 
are glued together using the combinatorial structure along the edges of $P''$. 
We believe it is worth giving an explicit description of this fact here. 
The following proposition is essential and enough to see that.  
 
 \begin{proposition}
 \label{prop:decomposition into ten ideal regular tetrahedra}
 Let $P_1$ be the shadowed polyhedron illustrated in Figure $\ref{figure:list_SP}$ equipped with 
 the coloring of $\partial P_1$ by $\partial_e(P_1) = \partial P_1$.  
 Let $M$ be the compact $3$-manifold represented by $P_1$.  
 Then $\Int M$ is a hyperbolic $3$-manifold that admits a decomposition 
 into ten ideal regular tetrahedra. 
Further,  let $Y_1$ and $Y_2$ be the cones over three points in $P_1$ that cut $P_1$ into 
$X$. 
Then $\pi_{P_1}^{-1} (Y_i) \cap \Int M$ $(i=1, 2)$ is a totally geodesic open pair of pants 
consisting of two faces of those ideal tetrahedra. 
\end{proposition}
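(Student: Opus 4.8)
The plan is to realize $\Int M$ by an explicit geometric ideal triangulation adapted to the local part $X$. First I would recall from the description of the fibers of $\pi_{P_1}$ over $X$ (Figure~\ref{figure:Xfiber}) that $V := \pi_{P_1}^{-1}(X)$ is a genus-three handlebody and that $\Int M$ is obtained from $\Int V$ by the gluings dictated by the half-open annular regions of $P_1$ along the component $c$ of $S(P_1)$. Following the scheme of Costantino--Thurston used throughout the appendix, I would then exhibit ten ideal tetrahedra together with an explicit face-pairing in which a prescribed set of faces is glued in pairs so as to assemble a genus-three handlebody combinatorially identified with $V$, while the remaining free faces --- all ideal triangles --- are identified according to the combinatorial structure along the two edges and two vertices of $P_1$ issuing from $X$.

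Using the fiber data over $\partial X$ recorded in Figure~\ref{figure:Xfiber}, I would match the boundary pattern of the triangulated handlebody with $\partial V$ and check that the glued-up object is homeomorphic to $\Int M$. As a consistency check, by Claim~\ref{claim:P1, P2,...,P5 re branched shadows of L1, L2,...,L5} the manifold $M$ is the exterior $E(L'_1)$ of the minimally twisted $5$-chain link, so one may also confirm the identification against its known decomposition; but I prefer the intrinsic construction because it makes the statement about $Y_1$ and $Y_2$ transparent.

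Next I would give each of the ten tetrahedra the regular ideal shape, so that every dihedral angle equals $\pi/3$, and verify Thurston's consistency conditions. Because all dihedral angles are equal, the edge equations reduce to the purely combinatorial assertion that each edge of the triangulation meets exactly six tetrahedron-edges, since $6 \cdot \pi/3 = 2\pi$; I would confirm this edge by edge from the face-pairing. For completeness I would inspect the cusp cross-sections: each is tiled by the equilateral vertex-triangles of the regular ideal tetrahedra, and I would check that the induced similarity holonomy around every cusp is a Euclidean translation. A positively oriented solution of the edge and completeness equations furnishes the complete hyperbolic metric; Mostow--Prasad rigidity identifies it as the unique one, and summing volumes gives $\operatorname{vol}(\Int M) = 10 v_{\mathrm{tet}}$.

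For the final assertion, note that by construction the two ideal triangles lying over each $Y_i$ are faces of the regular ideal tetrahedra, hence totally geodesic regular ideal triangles in $\mathbb{H}^3$; glued along their three edges they form a thrice-punctured sphere, that is, an open pair of pants, which is totally geodesic because it is assembled from totally geodesic faces. This is exactly $\pi_{P_1}^{-1}(Y_i) \cap \Int M$. The genuinely delicate step, and the main obstacle, is producing the explicit face-pairing of the ten tetrahedra and confirming that it reconstructs $\Int M$ compatibly with $\pi_{P_1}$; once the combinatorics of the pairing are pinned down, the regularity bookkeeping --- valence six at each edge and Euclidean cusp cross-sections --- and the totally geodesic conclusion for the $Y_i$ become finite verifications.
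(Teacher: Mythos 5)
Your plan correctly identifies what needs to be shown, but it defers precisely the step that constitutes the proof: the explicit face-pairing of the ten ideal tetrahedra. You acknowledge this yourself, calling it ``the genuinely delicate step, and the main obstacle,'' and no candidate pairing is produced. Everything downstream --- the valence-six check at each edge, the Euclidean holonomy of the cusp cross-sections, the identification of the glued-up complex with $\Int M$ compatibly with $\pi_{P_1}$ --- is contingent on that missing combinatorial data, so as written the argument cannot be checked or completed. The paper takes a different route that avoids constructing a triangulation by hand: having already established in the proof of Theorem~\ref{thm:main theorem} that $M = E(L'_1)$ (the exterior of the minimally twisted $5$-chain link), it invokes the Dunfield--Thurston description of this manifold. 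The involution $\tau$ of Figure~\ref{figure:tau} exhibits $E(L'_1)$ as a double branched cover of $\partial \Delta^4$ minus open ball neighborhoods of the vertices of the $4$-simplex, with branch locus the edge set of $\Delta^4$ (Figure~\ref{figure:5chainlink_eq}). The quotient decomposes into five truncated tetrahedra with exactly three faces meeting along each edge of the branch locus, so the lift consists of ten truncated tetrahedra with all dihedral angles $\pi/3$, i.e., ten ideal regular tetrahedra on $\Int M$. Regularity and completeness come for free from this symmetric model; there are no Thurston gluing or completeness equations left to solve. Notably, you mention exactly this identification $M = E(L'_1)$ as a mere ``consistency check'' and set it aside in favor of the intrinsic construction --- which is the part you do not supply.

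There is also a gap in your final paragraph. Two totally geodesic ideal triangles glued along their edges do not automatically form a totally geodesic surface: the union is totally geodesic only if the two faces meet at angle $\pi$ along each common edge, i.e., the surface is unbent there. In the paper this is visible from the explicit decomposition: the pairs of pants $\pi_{P_1}^{-1}(Y_i)$ are unions of two faces of the truncated tetrahedra (Figure~\ref{figure:XsubsetP1}), and since six dihedral angles of $\pi/3$ surround each edge, three tetrahedra lie on each side of the surface, contributing angle $\pi$ on each side. In your setup this unbending condition would be yet another consequence of the unconstructed face-pairing, so the claim that the pair of pants is ``totally geodesic because it is assembled from totally geodesic faces'' does not stand on its own.
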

\begin{proof}
As we have seen in the proof of Theorem~\ref{thm:main theorem}, 
$M = E(L_1')$. 
By Dunfield-Thurston \cite{DT03}, the complement of $L'_{1}$ 
can be decomposed into ten ideal regular tetrahedra as follows. 

Let $\tau :E(L'_{1}) \to E(L'_{1})$ be the involution as shown in Figure \ref{figure:tau}. 
\begin{figure}[htbp]
\centering\includegraphics[width=6cm]{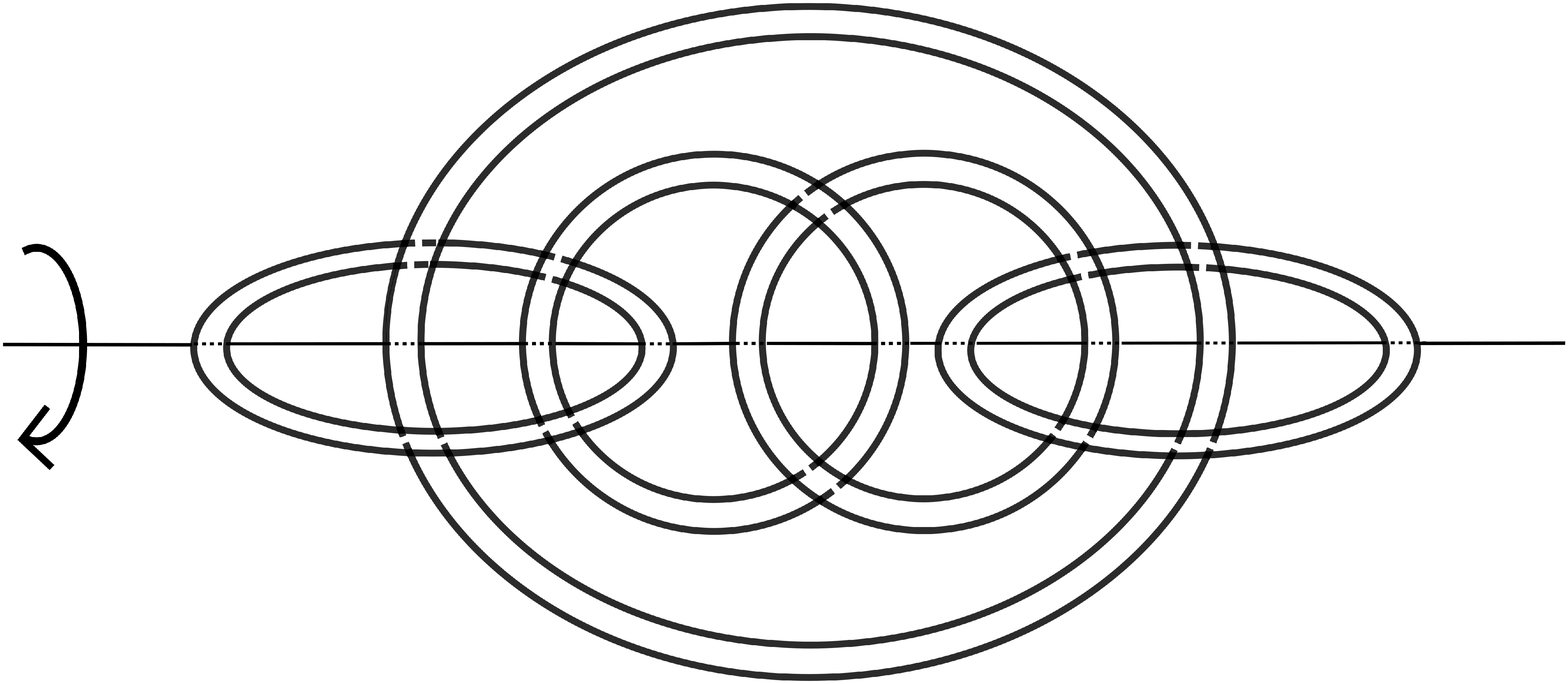}
\begin{picture}(400,0)(0,0)
\put(116,35){$\tau$}
\end{picture}
\caption{The involution $\tau$.}
\label{figure:tau}
\end{figure}
Consider the double branched cover $E(L'_{1}) \to E(L'_{1})/\tau$. 
We think of $S^3$ as the boundary of the $4$-simplex $\Delta^4$. 
Then as shown in Figure \ref{figure:5chainlink_eq}, 
$E(L'_{1})/ \tau$ is homeomorphic to the boundary of $\Delta^4$ minus 
an open ball neighborhood 
of each vertex of $\Delta^4$, and 
the image of the set of fixed points of $\tau$ corresponds to the set of edges of $\Delta^4$. 
\begin{figure}[htbp]
\centering\includegraphics[width=10cm]{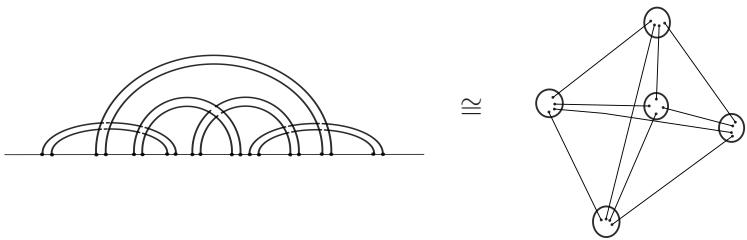}
\begin{picture}(400,0)(0,0)
\put(230,60){$\cong$}
\end{picture}
\caption{The quotient space $E(L'_{1})/\tau$.}
\label{figure:5chainlink_eq}
\end{figure}
In this way, we obtain a decomposition of $E(L'_{1})/ \tau$ into five truncated tetrahedra such that 
each edge is touched by three faces. 
This decomposition lifts to a decomposition of $E(L'_{1})$ into truncated tetrahedra 
whose dihedral angles are all $\pi / 3$. 
Thus, this induces a decomposition of $\Int M$ into ten ideal regular hyperbolic tetrahedra. 

The preimages $\pi_{P_1}^{-1} (Y_1)$ and $\pi_{P_1}^{-1} (Y_2)$ 
of $Y_1$ and $Y_2$ are pair of pants as shown in 
Figure \ref{figure:XsubsetP1}. 
\begin{figure}[htbp]
\centering\includegraphics[width=12cm]{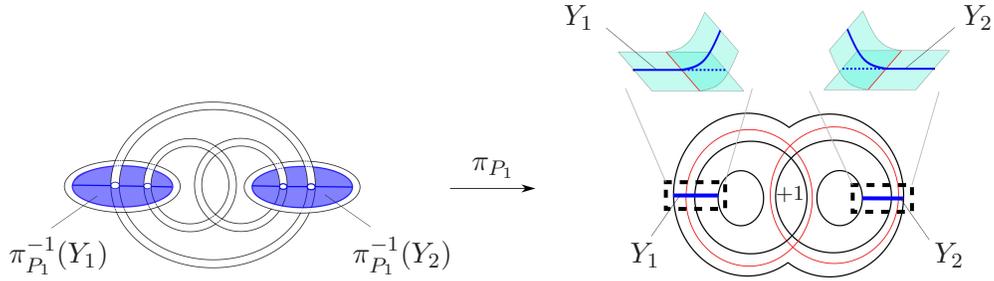}
\caption{The preimages $\pi_{P_1}^{-1} (Y_1)$ and $\pi_{P_1}^{-1} (Y_2)$.}
\begin{picture}(400,0)(0,0)
\put(15,40){$\pi_{P_1}^{-1} (Y_1)$}
\put(145,40){$\pi_{P_1}^{-1} (Y_2)$}
\put(190,75){$\pi_{P_1}$}
\put(305,64){\scriptsize $+1$}
\put(250,40){$Y_1$}
\put(365,40){$Y_2$}
\put(225,130){$Y_1$}
\put(376,130){$Y_2$}
\end{picture}
\label{figure:XsubsetP1}
\end{figure}
Each of those pair of pants consists of exactly two faces of the truncated tetrahedra 
constructed above. 
Thus, $\pi_{P_1}^{-1} (Y_i) \cap \Int M$ $(i=1, 2)$ is a totally geodesic open pair of pants 
consisting of two faces of the above ideal regular tetrahedra. 
\end{proof}
  
As we have seen in Claim~\ref{claim:classification of P prime} of the proof of 
Theorem~\ref{thm:main theorem}, the polyhedra $P_2, P_3, P_4$ 
illustrated in Figure $\ref{figure:list_SP}$ are all obtained by cutting 
$P_1$ along $Y_1 \cup Y_2$ and then re-gluing the cut ends appropriately. 
Thus, we have the following.

 \begin{corollary}
 \label{cor:volume}
 The links $L_1', L_2', L_3', L_4' \subset S^3$ in Figure  $\ref{figure:Furutanilink}$ are all hyperbolic 
 of volume $10v_{\mathrm{tet}}$. 
\end{corollary}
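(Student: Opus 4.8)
The plan is to deduce the corollary from Proposition~\ref{prop:decomposition into ten ideal regular tetrahedra} together with the cut-and-reglue description of $P_2, P_3, P_4$ recorded just above the statement. First I would dispose of $L_1'$ directly. By Proposition~\ref{prop:decomposition into ten ideal regular tetrahedra}, the manifold $E(L_1')$ is the interior of the compact manifold $M$ represented by $P_1$, and $\Int M$ is hyperbolic and decomposes into ten ideal regular tetrahedra. Since each ideal regular tetrahedron has volume $v_{\mathrm{tet}}$, summing the contributions gives $\mathrm{vol}(E(L_1')) = 10 v_{\mathrm{tet}}$, and in particular $L_1'$ is hyperbolic.

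For $i \in \{2,3,4\}$ I would exploit the observation that $P_i$ is obtained from $P_1$ by cutting along $Y_1 \cup Y_2$ and re-gluing the cut ends according to the combinatorial structure of $P_i$. Passing to preimages under $\pi_{P_1}$, this says that $E(L_i')$ is obtained from $E(L_1')$ by cutting along the two surfaces $\pi_{P_1}^{-1}(Y_1)$ and $\pi_{P_1}^{-1}(Y_2)$ and re-attaching by a new identification dictated by the edge structure of $P_i$. The crucial input from Proposition~\ref{prop:decomposition into ten ideal regular tetrahedra} is that each $\pi_{P_1}^{-1}(Y_j) \cap \Int M$ $(j = 1,2)$ is a totally geodesic open pair of pants built out of exactly two of the ideal regular triangular faces of the tetrahedral decomposition.

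I would then argue as follows. Cutting the complete hyperbolic manifold $E(L_1')$ along these two totally geodesic pairs of pants produces a hyperbolic manifold with totally geodesic boundary that still carries the same ten ideal regular tetrahedra. Because the relevant faces are all totally geodesic ideal regular triangles, hence mutually isometric, the re-gluing prescribed by $P_i$ can be realized by isometries of those pairs of pants; re-attaching by such isometries yields again a complete, finite-volume hyperbolic manifold, decomposed into the same ten ideal regular tetrahedra. Since the hyperbolic metric is altered only along a set of measure zero, the volume is unchanged, so $\mathrm{vol}(E(L_i')) = 10 v_{\mathrm{tet}}$ and $L_i'$ is hyperbolic.

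I expect the main obstacle to be verifying that the combinatorial re-gluing coming from $P_i$ is actually realized by an isometry of the totally geodesic pair of pants: one must check that the permutations of the two ideal regular triangular faces, together with their ideal vertices, prescribed by the edge structure of $P_i$ lie in the isometry group of $\pi_{P_1}^{-1}(Y_j)$, and that after re-gluing both the angle-sum condition around each edge of the tetrahedral complex and the completeness condition at each cusp continue to hold. Once this compatibility is confirmed, the hyperbolicity of $E(L_i')$ follows and the volume count is immediate.
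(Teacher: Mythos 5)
Your proposal is correct and takes essentially the same route as the paper: the paper likewise deduces the corollary from Proposition~\ref{prop:decomposition into ten ideal regular tetrahedra} together with the observation that $P_2, P_3, P_4$ arise from cutting $P_1$ along $Y_1 \cup Y_2$ and re-gluing, so that each $E(L_i')$ is obtained from $E(L_1')$ by cutting along the two totally geodesic pairs of pants and re-gluing isometrically, which preserves the decomposition into ten ideal regular tetrahedra and hence the volume $10v_{\mathrm{tet}}$. The isometry-realization and completeness checks you flag as the main obstacle are exactly the points the paper leaves implicit in its one-line deduction.
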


By the proof of  Proposition \ref{prop:decomposition into ten ideal regular tetrahedra}, 
we also have the following. 

\begin{corollary}
\label{cor:cusp shapes}
The cusp shapes of the complements of the 
hyperbolic links $L_1', L_2', L_3', L_4'  \subset S^3$ in Figure $\ref{figure:Furutanilink}$  
are as shown in Figure $\ref{figure:cuspshape}$. 
\end{corollary}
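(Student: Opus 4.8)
The plan is to read the cusp shapes directly off the ideal triangulations produced in the proof of Proposition~\ref{prop:decomposition into ten ideal regular tetrahedra}. The starting point is the standard fact that, in a complete hyperbolic structure assembled from ideal regular tetrahedra, the horospherical cross-section at each ideal vertex is a Euclidean equilateral triangle: the three angles of such a cross-section equal the three dihedral angles along the edges meeting at that vertex, and all of these are $\pi/3$ for the regular ideal tetrahedron. Equivalently, these are exactly the triangular truncation faces of the truncated tetrahedra appearing in that proof. Consequently, each cusp cross-section of $E(L_i')$ is a flat torus tiled by equilateral triangles, and its cusp shape is the similarity class of this torus, which is determined entirely by the combinatorial pattern in which the truncation triangles are glued along their edges — a pattern dictated by the face identifications of the ten tetrahedra.

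First I would treat $L_1'$. Using the double branched cover $E(L_1') \to E(L_1')/\tau$ together with the identification of $E(L_1')/\tau$ with $\partial \Delta^4$ minus open vertex neighborhoods, I would reconstruct, for each cusp torus, the triangulated flat torus obtained by collecting the truncation triangles lying over a given link component and gluing them according to the combinatorial structure along the edges of $\Delta^4$ and its lift. Since the $S_5$-symmetry of $\Delta^4$ acts transitively on its five vertices, hence on the five cusps, all cusps of $E(L_1')$ are isometric and only one cusp shape must be computed. Fixing a meridian–longitude basis, I would express the two generating translations of the cusp lattice as vectors in the triangular lattice $\Integer[\omega]$ with $\omega = e^{\pi i/3}$, from which the modulus, i.e.\ the cusp shape, is immediate.

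For $L_2', L_3', L_4'$ I would exploit the description already used in the proof of Theorem~\ref{thm:main theorem}: each of these complements is obtained from $E(L_1')$ by cutting along the totally geodesic pair of pants $\pi_{P_1}^{-1}(Y_1) \cup \pi_{P_1}^{-1}(Y_2)$ and re-gluing the cut ends by a different combinatorial identification. Because this surface is totally geodesic and, by Proposition~\ref{prop:decomposition into ten ideal regular tetrahedra}, consists of exactly two triangular faces of the ideal tetrahedra at each $Y_i$, the decomposition into ten ideal regular tetrahedra survives the re-gluing; only the adjacencies of the truncation triangles lying on the cusps that meet $Y_1 \cup Y_2$ are altered. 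I would therefore track how the re-gluing permutes these adjacencies, rebuild the affected cusp tori, and recompute their lattice vectors in $\Integer[\omega]$, while the shapes of the cusps disjoint from $Y_1 \cup Y_2$ are inherited unchanged from $E(L_1')$.

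The main obstacle will be the purely combinatorial bookkeeping: correctly following each truncation triangle through the chain of face identifications in order to assemble the triangulated cusp torus, and then pinning down which cycles in the tiling represent the meridian and the longitude of the corresponding link component. This is precisely where the re-gluing producing $L_2', L_3', L_4'$ demands the most care, since it is the triangles adjacent to $Y_1 \cup Y_2$ whose neighbors change. Once the meridian and longitude translations are identified as elements of $\Integer[\omega]$, computing each modulus and matching it against Figure~\ref{figure:cuspshape} is routine.
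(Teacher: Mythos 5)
Your proposal follows essentially the same route as the paper, whose proof of this corollary is simply the pointer ``by the proof of Proposition~\ref{prop:decomposition into ten ideal regular tetrahedra}'': one reads the cusp cross-sections off the decomposition into ten ideal regular tetrahedra (equivalently, assembles the equilateral truncation triangles coming from the double branched cover over $\partial \Delta^4$), and treats $L_2', L_3', L_4'$ exactly as you do, by cutting along the totally geodesic pairs of pants $\pi_{P_1}^{-1}(Y_1) \cup \pi_{P_1}^{-1}(Y_2)$ and re-gluing, which preserves the tetrahedral decomposition and only rearranges the adjacencies of the affected cusp triangles. Your write-up is correct and in fact supplies more of the bookkeeping (the lattice computation in $\Integer[\omega]$ and the symmetry argument for the five cusps of $E(L_1')$) than the paper makes explicit.
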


\begin{figure}[htbp]
\centering\includegraphics[width=14cm]{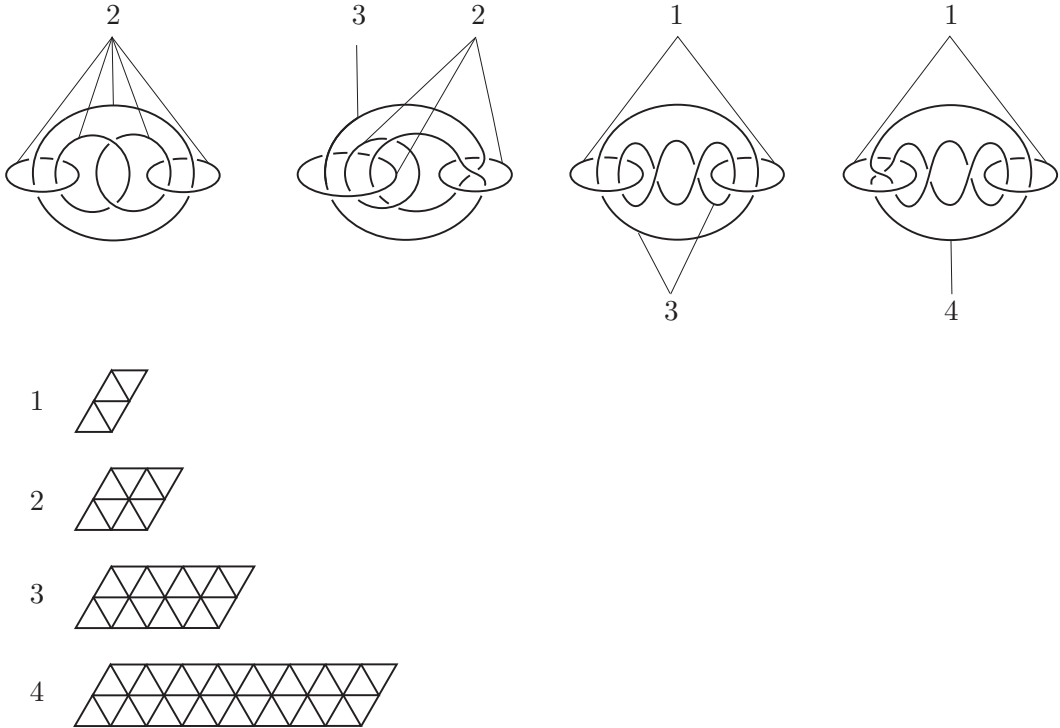}
\begin{picture}(400,0)(0,0)
\put(10,133){$1$}
\put(10,95){$2$}
\put(10,60){$3$}
\put(10,23){$4$}
\put(39,279){$2$}
\put(132,279){$3$}
\put(177,279){$2$}
\put(252,279){$1$}
\put(250,167){$3$}
\put(356,279){$1$}
\put(356,167){$4$}
\end{picture}
\caption{The cusp shapes of the complements of $L_1', L_2', L_3', L_4' $.}
\label{figure:cuspshape}
\end{figure}

\end{document}